\numberwithin{equation}{section}
\def\ov{\overline}
\newcommand{\R}{\mathbb{R}}
\newcommand{\Z}{\mathbb{Z}}
\newcommand{\T}{\mathbb{T}}
\newtheorem{theorem}{Theorem}[section]
\newtheorem{lemma}[theorem]{Lemma}
\newtheorem{proposition}[theorem]{Proposition}
\theoremstyle{definition}
\newtheorem{remark}[theorem]{Remark}
\newcommand{\Extend}[5]{\ext@arrow0099{\arrowfill@#1#2#3}{#4}{#5}}
\title[Internal controllability of non-localized solution for the Kadomtsev-Petviashvili II equation ]{Internal controllability of non-localized solution for the Kadomtsev-Petviashvili II equation }
\author[I.~Rivas, C-M.~Sun]{Ivonne Rivas $^{1}$, Chenmin Sun$^{2}$ } 
\thanks{1. Some part of  this article was done during the visit to the Laboratoire J.A. Dieudonn\'e, supported by the European Research Council, ERC-2012-ADG, project number 320845: Semi classical Analysis and Partial Differential Equations}
\thanks{2. Supported by the European Research Council, ERC-2012-ADG, project number 320845: Semi classical Analysis and Partial Differential Equations}
\address{Universit\'e C\^{o}te d'Azur, LJAD,Nice, France$^1$ and Universidad del Valle, Cali, Colombia$^2$.}
\email{ivonne.rivas@correounivalle.edu.co$^1$}
\email{csun@unice.fr$^2$}
\begin{document}
 \maketitle

\begin{abstract}
The internal control problem for the Kadomstev-Petviashvili II equation, better known as KP-II,  is the object of studying in this paper. The controllability in $L^2(\T^2)$ from vertical strip is proved using the Hilbert Unique Method through the techniques of semiclassical and microlocal analysis.  Additionally,  a negative result for the controllability in  $L^2(\T^2)$ from horizontal strip is also showed.   \\
\end{abstract}

\section{Introduction} 
The Kadomtsev-Petviashvili equations better known as KP is  
\begin{equation}\label{kpall}  \partial_x(\partial_tu+\partial_x^3u+u\partial_x u)\pm\partial^2_{y}u=0 \end{equation}
and it was introduced  by  Kadomtsev and Petviashvili (see \cite{KP}) in 1970  from the study of  transverse stability of the solitary wave solution of the Korteweg-de Vries (KdV) equation.  The  KP equations are completely integrable and they can be solved by inverse scattering transform. Moreover, the equation \eqref{kpall} has been studied separately depending  on the sign that is used,  with a negative sign it is known as KP-I equation, otherwise it is the KP-II equation.  The propagation of the trajectories  behave very differently from one equation to another one and they do not allow us to study at the same time.   In this paper, we concentrate on the KP-II equation.\\   

Concerning to the Cauchy problem, the KP-II equation has been well studied.
In a pioneering work,  Bourgain \cite{B93} proved the global well-posedness of KP-II equation in $L^2(\mathbb{T}^2)$ by using the Fourier restriction norm introduced by himself in \cite{B-Gafa}. For non-periodic setting, Takaoka and Tzvetkov in \cite{Tzvetkov} proved local well-posedness in anisotropic Sobolev space $H^{s_1,s_2}(\mathbb{R}^2)$ with $s_1>-\frac{1}{3}$ and $s_2\geq 0$.  Hadac, Kerr and Koch in \cite{Koch} proved global well-posedness and scattering for small data in critical functional space $H^{-\frac{1}{2},0}(\mathbb{R}^2)$.  Molinet, Saut and Tzvetkov  in \cite{Saut} showed  the local and global well-posedness for partially periodic data .
 
 We will address the problem of exact controllability for KP-II equation. Before getting into this problem, we observe that \eqref{kpall} can be written as
 $$ \partial_tu+\partial_x^3u+u\partial_xu\pm\partial^{-1}_{x}\partial_y^2u=0,
 $$ 
 where the Fourier multiplier $\partial_x^{-1}$ is defined by 
 $$ \widehat{\partial_x^{-1}v}(k,\eta)=\frac{1}{ik}\widehat{v}(k,\eta)
 $$
 for all distributions with horizontal mean value 
 $$v\in\mathcal{D}'_0(\mathbb{T}^2):=\{v\in\mathcal{D}'(\mathbb{T}^2):\widehat{v}(0,l)=0\textrm{ for all }l\in\mathbb{Z}\}.
 $$
For any $s\in\mathbb{R},$ we  denote by $H_0^s(\mathbb{T}^2):=H^s(\mathbb{T}^2)\cap\mathcal{D}_0'(\mathbb{T}^2)$, a closed subspace of $H^s(\mathbb{T}^2)$. In particular,  $L_{0}^{2}(\mathbb{T}^2):= H^{0}_{0}(\mathbb{T}^2)$.  Additionally, for an open set $\omega$, define $$C_{\omega}^2:=\{g\in C^{2}(\mathbb{T}^2):\   \ g(x,y) \neq 0,  (x,y)\in \omega  \subset \mathbb{T}^2 , \text{ otherwise }  g(x,y) = 0, \text{outside } \omega  \}.$$

The internal control problem that we are interested  in this paper is:
given $T>0$ and  $u_0 ,u_1\in L_0^2$, does there exist a control input $f\in L^2((0,T);L^2(\mathbb{T}^{2})$, supported on some open subset $\omega\subset \T^2$, such that the solution of 
\begin{align} \label{exactcontrol}
\begin{cases}    \partial_tu+\partial_x^3u+\partial_x^{-1}\partial_y^2u + u\partial_{x}u=f,\quad
(t,x,y)\in\R\times\mathbb{T}^2,
\\
u|_{t=0}=u_0\in L_0^2(\mathbb{T}^2),
\end{cases}
\end{align}
satisfies $u(T,\cdot)=u_1$?\\

Additionally, we face the difficulty that the control input $f$ should be localized in $\omega$ while keeping the horizontal mean value. However, if the control region $\omega$ is either a horizontal strip or a vertical strip, we can define the control operator as follows.

For a vertical control region of the form $\omega=(a,b)\times\mathbb{T}$, we fix a non-negative real-valued function $g\in C_{\omega}^2(\mathbb{T})$  such that $(a,b)=\{x\in\mathbb{T}:g(x)>0\}$ and 
$\int_{\mathbb{T}}g=1$. In this case, we define the control input $\mathcal{G}h$, where $\mathcal{G}$ is the linear operator:
\begin{equation}\label{verticalcontrol}
\mathcal{G}h(x,y):=g(x)\left(h(x,y)-\int_{\mathbb{T}}g(x')h(x',y)dx'\right).
\end{equation} 
If the control region is a horizontal strip of the form $\omega=\mathbb{T}\times (a,b)$, we define the control input as $\mathcal{K}h$, where $\mathcal{K}$ is the operator:
\begin{equation}\label{horizontalcontrol}
\mathcal{K}h(x,y):=g(y)\left(h(x,y)-\int_{\mathbb{T}}g(y')h(x,y')dy'\right).
\end{equation}

Our first result concerns the internal controllability of the linearized KP-II equation on vertical region:
\begin{align} \label{exactcontrollinear}
\begin{cases}    \partial_tu+\partial_x^3u+\partial_x^{-1}\partial_y^2u  =\mathcal{G}h,\quad
(t,x,y)\in\R\times\mathbb{T}^2,
\\
u|_{t=0}=u_0\in L_0^2(\mathbb{T}^2).
\end{cases}
\end{align}

\begin{theorem}\label{positive}
Given $T>0$, and $u_{0}, u_{1}\in L_0^{2}(\T^2)$, there exists  $h \in L^{2}((0,T);L^{2}(\T^2))$, such that the solution $u$ of \eqref{exactcontrollinear} satisfies $u(T)=u_{1}$.
\end{theorem}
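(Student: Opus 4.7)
The overall strategy is the Hilbert Uniqueness Method (HUM). A direct computation shows that $\mathcal{G}_\perp$ is self-adjoint on $L^2(\mathbb{T}^2)$ and maps into $L_0^2(\mathbb{T}^2)$ (the normalisation $\int g=1$ being exactly what forces the image into $\mathcal{D}_0'$), while the operator $\partial_x^3+\partial_x^{-1}\partial_y^2$ is skew-adjoint on $L_0^2(\mathbb{T}^2)$ and hence generates a unitary group $S(t)$. By the standard HUM duality, the exact controllability claimed in Theorem \ref{positive} is equivalent to the observability inequality
\begin{equation*}
\|\varphi_0\|_{L^2}^2\le C_T\int_0^T\!\!\int_{\mathbb{T}^2}g(x)^2\Bigl|\varphi(t,x,y)-\!\int_{\mathbb{T}}\!g(x')\varphi(t,x',y)\,dx'\Bigr|^2\,dx\,dy\,dt
\end{equation*}
for every $\varphi_0\in L_0^2(\mathbb{T}^2)$, where $\varphi(t)=S(t)\varphi_0$. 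Establishing this estimate is the main task.

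Next I would Fourier-decompose in $y$. Writing $\varphi(t,x,y)=\sum_{l\in\mathbb{Z}}\varphi_l(t,x)e^{ily}$, the averaging operator $\varphi\mapsto\int g(x')\varphi(x',y)dx'$ is diagonal in the $l$-variable, so the observation splits orthogonally. Each mode solves the one-dimensional nonlocal equation
\begin{equation*}
\partial_t\varphi_l+\partial_x^3\varphi_l-l^2\partial_x^{-1}\varphi_l=0\qquad\text{on }\mathbb{R}_t\times\mathbb{T}_x,
\end{equation*}
and the whole problem reduces to a family of one-dimensional estimates
\begin{equation*}
\|\varphi_l(0)\|_{L^2(\mathbb{T})}^2\le C_T\int_0^T\bigl\|g(\varphi_l-\langle g,\varphi_l\rangle)\bigr\|_{L^2(\mathbb{T})}^2\,dt,\qquad l\in\mathbb{Z},
\end{equation*}
in which the constant $C_T$ must be independent of $l$.

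For fixed $l$, I would follow a compactness-uniqueness scheme. Assuming the inequality fails along a sequence normalised by $\|\varphi_l^n(0)\|=1$, I extract a weak limit $\varphi_l^\infty$ and track the loss of compactness via a semiclassical defect measure $\mu$ on $T^*((0,T)\times\mathbb{T}_x)$. The symbol is $p_l(k)=k^3-l^2/k$ with group velocity $p_l'(k)=3k^2+l^2/k^2$, strictly positive away from $(k,l)=(0,0)$ and blowing up both at $k\to0$ and at $|k|\to\infty$. Consequently every bicharacteristic in $T^*\mathbb{T}_x$ sweeps the whole circle, meeting $\mathrm{supp}(g)$; the propagation-of-singularities theorem then forces $\mu$ to vanish. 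Strong convergence follows, and $\varphi_l^\infty$ solves the same equation while being constant in $x$ on $(0,T)\times\mathrm{supp}(g)$; combined with the mean-zero constraint inherited from $L_0^2$, a unique continuation argument forces $\varphi_l^\infty\equiv0$, contradicting the normalisation.

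The real difficulty is obtaining uniformity in $l$. For $|l|\gg1$ the natural semiclassical parameter is $h=|l|^{-1}$, and the symbol $p_l$ exhibits two regimes separated by the scale $|k|\sim|l|^{1/2}$: a ``fast'' KdV-like regime $|k|\gg|l|^{1/2}$, and a ``slow'' nonlocal regime $|k|\lesssim|l|^{1/2}$ in which the term $l^2/k$ dominates and the operator behaves like a first-order nonlocal Schr\"odinger-type perturbation. A two-scale semiclassical propagation calculus handling both regimes, with constants independent of $l$, will be the technical core of the proof. The unique continuation step is also non-trivial, since the nonlocal operator $\partial_x^{-1}$ obstructs a direct Carleman estimate, and one must exploit the Fourier-series structure on $\mathbb{T}$ and the spectral properties of the stationary operator $\partial_x^3-l^2\partial_x^{-1}$.
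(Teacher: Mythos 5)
Your high-level architecture coincides with the paper's: HUM reduces the theorem to an observability inequality, the vertical strip makes the averaging diagonal in the $y$-Fourier variable, and the whole problem collapses to proving
\begin{equation*}
\|\varphi_l(0)\|_{L^2(\mathbb{T})}^2\le C_T\int_0^T\bigl\|g\bigl(\varphi_l-\langle g,\varphi_l\rangle\bigr)\bigr\|_{L^2(\mathbb{T})}^2\,dt
\end{equation*}
uniformly in $l$, for the family of one-dimensional equations $\partial_t\varphi_l+\partial_x^3\varphi_l-l^2\partial_x^{-1}\varphi_l=0$. You also correctly read off the symbol $p_l(k)=k^3-l^2/k$, the everywhere-positive group velocity $3k^2+l^2/k^2$, and the critical frequency scale $|k|\sim|l|^{1/2}$ separating the KdV-like regime from the $\partial_x^{-1}$-dominated regime. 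All of this matches the paper.

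The gap is that you stop exactly where the proof actually has to begin. You write that ``a two-scale semiclassical propagation calculus handling both regimes, with constants independent of $l$, will be the technical core of the proof'' --- but that calculus \emph{is} the theorem, and you have not supplied it. The paper carries it out concretely: setting $\lambda^2=h^{-4}$ and rescaling time by $w(t,x)=u(h^2t,x)$ turns the $l$-family into a single $h$-semiclassical equation $h\partial_tw+(h\partial_x)^3w-(h\partial_x)^{-1}w=0$ with symbol $\xi^3-1/\xi$, and the two regimes you describe both sit at $\xi\sim1$ after this rescaling. A homogeneous Littlewood--Paley decomposition in $\xi$ then splits the problem into dyadic pieces, each of which is treated by a further time rescaling so that the relevant symbol is either $\epsilon^4/\xi-\xi^3$ or $1/\xi-\epsilon^4\xi^3$ (with a uniform lower bound on the velocity on the support of the cut-off), and the frequency-localized observability for each piece is proved by Egorov's theorem plus Sharp G\aa rding, with constants uniform in the small parameter $\epsilon$. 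Summing the dyadic estimates recovers the uniform-in-$\lambda$ weak observability with an $\|u_0\|_{H^{-1}}^2$ remainder, and a separate compactness--uniqueness step (where the $H^{-1}$ term forces the unobservable subspace to be finite-dimensional, hence consisting of trigonometric polynomials that vanish on $\omega$, hence zero) removes the remainder. Your sketch of defect measures for fixed $l$ and ``propagation of singularities'' is plausible but is a \emph{different} and unfinished route for the fixed-$l$ problem; it still leaves the uniformity in $l$ entirely open. Without the rescaling/Littlewood--Paley/Egorov--G\aa rding machinery (or an alternative you actually write down), the key estimate is unproved, so the proposal as it stands does not establish the theorem.

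Two minor remarks. First, your concern that unique continuation is ``non-trivial'' because $\partial_x^{-1}$ blocks Carleman estimates is misplaced here: once the weak observability (with the compact $H^{-1}$ term) is in hand, the unobservable subspace is finite-dimensional and flow-invariant, so its elements are trigonometric polynomials that must vanish identically if they vanish on an interval --- no Carleman estimate is needed. Second, for fixed $l$ there is no semiclassical parameter, so the tool would be a microlocal (Gérard--Tartar) defect measure rather than a semiclassical one; the semiclassical scale $h\sim|l|^{-1/2}$ only enters when treating $l\to\infty$, which is precisely what you have deferred.
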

For the nonlinear control system,
\begin{align} \label{exactcontrol-vertical}
\begin{cases}    \partial_tu+\partial_x^3u+\partial_x^{-1}\partial_y^2u + u\partial_{x}u=\mathcal{G}h,\quad
(t,x,y)\in\R\times\mathbb{T}^2,
\\
u|_{t=0}=u_0\in L_0^2(\mathbb{T}^2),
\end{cases}
\end{align}
by adapting a pertubative argument, relying on the Cauchy theory for the KP-II equation, we obtain the following result of the exact controllability in a local sense.
\begin{theorem}\label{nonlinear}
Given $T>0$, there exists $R > 0$ such that for any $u_0, u_1\in  L_0^2(\mathbb{T}^2)$
	satisfying  $\|u_0\|_{L^2(\mathbb{T}^2)}\leq R$ and $\|u_1\|_{L^2(\mathbb{T}^2)}\leq R$, there exists a control $h\in L^2((0,T);L^2(\mathbb{T}^2)) $, such that the solution $u$ of $\eqref{exactcontrol-vertical}$ with $\mathcal{G}$ satisfies $u(T) = u_1.$
\end{theorem}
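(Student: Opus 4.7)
The plan is to reduce the nonlinear problem to the linear one via a fixed-point argument, working in the Bourgain spaces that underpin the Cauchy theory of KP-II on $\T^2$. First I would upgrade Theorem~\ref{positive} to an \emph{enhanced linear control operator}: given $(u_0,u_1)\in L_0^2\times L_0^2$ and an inhomogeneity $f$ in a suitable dual Bourgain space $X^{0,-b'}_T$, one constructs $h=\Gamma(u_0,u_1,f)\in L^2((0,T);L_0^2)$ such that the solution of the linear KP-II equation with source $f$, control $\mathcal{G}_{\pe}(h)$, and initial datum $u_0$ equals $u_1$ at time $T$. This is the standard HUM extension: apply the operator from Theorem~\ref{positive} to the shifted target $u_1-\int_0^T U(T-s)f(s)\,ds$, provided one can bound the Duhamel contribution in $L_0^2$ by $\|f\|_{X^{0,-b'}_T}$.

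Second, I would seek a solution of \eqref{exactcontrol} as a fixed point of the map
\begin{equation*}
\Phi(v)(t)=U(t)u_0+\int_0^t U(t-s)\bigl(\mathcal{G}_{\pe}(\Gamma(u_0,u_1,-v\partial_x v))(s)-v\partial_x v(s)\bigr)\,ds,
\end{equation*}
where $U(t)$ denotes the linear KP-II propagator. By construction $\Phi(v)(T)=u_1$ for every admissible $v$, so any fixed point yields a controlled trajectory with control $h_2=\Gamma(u_0,u_1,-u\partial_x u)$.

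Third, I would check that $\Phi$ is a contraction on a small ball of $X^{0,b}_T$ for $b\in(1/2,1)$ slightly above $1/2$. The crucial analytic input from the Cauchy theory is Bourgain's bilinear estimate on $\T^2$,
\begin{equation*}
\|v_1\partial_x v_2\|_{X^{0,-b'}_T}\lesssim \|v_1\|_{X^{0,b}_T}\|v_2\|_{X^{0,b}_T},
\end{equation*}
valid within $L_0^2(\T^2)$. Combined with the usual linear $X^{s,b}$ estimates for $U(t)u_0$ and the Duhamel term, and with the boundedness of $\Gamma$ from Step~1, this produces $\|\Phi(v)\|_{X^{0,b}_T}\le C(\|u_0\|_{L^2}+\|u_1\|_{L^2})+C\|v\|_{X^{0,b}_T}^2$ together with an analogous Lipschitz bound on differences. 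Choosing $R$ small so that $4C^2R<1$, $\Phi$ is a contraction on the ball of radius $2CR$ in $X^{0,b}_T$, and Banach's theorem delivers the desired fixed point.

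The main obstacle is Step~1: one must show that the HUM operator of Theorem~\ref{positive} composes properly with a Duhamel perturbation whose source lies only in the Bourgain dual space $X^{0,-b'}_T$, and that the resulting $\Gamma$ still takes values in $L^2_tL^2_x$. This amounts to revisiting the observability inequality for the linear adjoint equation so as to accommodate inhomogeneous forcing on the primal side, which is a functional-analytic refinement of the microlocal estimates already used to prove Theorem~\ref{positive} rather than a fundamentally new ingredient. A technical corollary is that the zero-mean constraint, preserved by both the flow and by $\mathcal{G}_{\pe}$, must also be compatible with the source $-v\partial_x v$, which indeed lies in $\mathcal{D}'_0(\T^2)$ since $v\in L_0^2$. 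Once these points are settled, the bilinear estimate makes the contraction argument routine.
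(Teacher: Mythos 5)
Your overall strategy is the one the paper uses: feed the shifted target $u_1-\upsilon(T,u)$, $\upsilon(T,u)=\int_0^T S(T-s)(u\partial_x u)(s)\,ds$, into the HUM operator of Theorem~\ref{positive} to produce the control $h_u=\Upsilon(u_0,u_1-\upsilon(T,u))$, then run a Banach fixed-point argument for the Duhamel map in Bourgain spaces. Your ``enhanced control operator'' $\Gamma(u_0,u_1,f)$ is exactly $\Upsilon(u_0,u_1-\int_0^T S(T-s)f(s)\,ds)$, so Step~1 requires no new observability estimate, only the boundedness of $\Upsilon$ from $(L^2)^2$ to $L^2_tL^2_x$ already recorded in Proposition~\ref{controloperator}; you correctly identify this. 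The preservation of the zero-mean constraint is also correctly handled.

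There is, however, a genuine gap in Step~3: the choice of function space. You propose to close the contraction in $X^{0,b}_T$ with $b\in(1/2,1)$ using the bilinear estimate $\|v_1\partial_x v_2\|_{X^{0,-b'}_T}\lesssim\|v_1\|_{X^{0,b}_T}\|v_2\|_{X^{0,b}_T}$. On the fully periodic domain $\mathbb{T}^2$ this estimate does \emph{not} hold for $b>\tfrac12$; the resonant lattice-point counting produces a logarithmic divergence that forces one to work at the endpoint $b=\tfrac12$, and the endpoint itself is not closable in a plain $X^{s,1/2}$ space. This is precisely why Bourgain \cite{B93} and Molinet--Saut--Tzvetkov \cite{Saut} introduce the extra weight index $b_1$ and the auxiliary norm $Z^{s,1/2}$, and why the paper's Proposition~\ref{bilinear} is stated in $X^{s,\frac12,b_1}_T\cap Z^{s,\frac12}_T$ with the companion endpoint linear estimate Proposition~\ref{linearestimate}. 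The paper even flags this explicitly after Proposition~\ref{bilinear1}: ``The proposition above is false for the end points $b'=-\frac12$ and $b=\frac12$. However, for periodic problem, it seems that we can not avoid to use these end points.'' As written, the bilinear input you invoke is unavailable, and the contraction estimate does not close. The repair is routine given the paper's preliminaries: replace $X^{0,b}_T$ throughout by $X^{0,\frac12,b_1}_T\cap Z^{0,\frac12}_T$ and invoke Propositions~\ref{linearestimate} and~\ref{bilinear} in place of the nonendpoint $X^{s,b}$ estimates. With that substitution your argument becomes the paper's proof.
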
 

\begin{remark}   
	In \cite{B93}, Bourgain proved that the KP-II equation is globally well-posed in $H_0^s(\mathbb{T}^2)$ for all $s\geq 0$. Our results  in Theorem \ref{positive} and \ref{nonlinear} also hold for any data in $H_0^s(\mathbb{T}^2)$. The  main reason  of considering  $L^2(\mathbb{T}^2)$ here is that the quantity
	$$ \int_{\mathbb{T}^2}|u(t,x,y)|^2dxdy
	$$
	is conserved along the  KP-II flow \eqref{kpall} and hence $L^2(\mathbb{T}^2)$ is a natural functional space to study the problem of controllability.
\end{remark}

On the contrary, for the controllability from the horizontal region 
\begin{align} \label{exactcontrollinear-horizontal}
\begin{cases}    \partial_tu+\partial_x^3u+\partial_x^{-1}\partial_y^2u  =\mathcal{K}h,\quad
(t,x,y)\in\R\times\mathbb{T}^2,
\\
u|_{t=0}=u_0\in L_0^2(\mathbb{T}^2),
\end{cases}
\end{align}
we have a negative answer which shows that the exact controllability for linearized KP-II equation cannot hold at any time $T>0$ when the control region is a horizontal strip. 
\begin{theorem}\label{negative}
Given $T>0$ and $u_0\in L_0^2(\T^2)$, there exists $u_1 \in L^2(\mathbb{T}^2) $ and it does not exist $h\in L^2((0,T);L_0^2(\mathbb{T}))$, such that the solution $u$ of \eqref{exactcontrollinear-horizontal}  satisfies $u(T)=u_1$.
\end{theorem}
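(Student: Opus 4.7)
The plan is to identify a scalar quantity that evolves freely under \eqref{exactcontrollinear} with $\mathcal{G}=\mathcal{G}_{\para}$, independently of the control, and then to produce an explicit obstruction. The key observation is that the normalization $\int_{\T}g(y)\,dy = 1$ forces
\begin{equation*}
\int_{\T} \mathcal{G}_{\para} h(x,y)\,dy \;=\; \int_{\T}g(y)h(x,y)\,dy \;-\; \int_{\T}g(y')h(x,y')\,dy' \;=\; 0 \qquad \text{for every } x \in \T,
\end{equation*}
so the horizontal control is invisible to the Fourier modes $\widehat{u}(t,k,0)$ with zero $y$-frequency. This is the algebraic heart of the theorem.

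With this in hand, I would introduce $v(t,x) := \int_{\T} u(t,x,y)\,dy$ and integrate \eqref{exactcontrollinear} over $y\in\T$. By periodicity, $\int_{\T}\pa_y^2 u\,dy = 0$; equivalently, at the Fourier level, $\pa_x^{-1}\pa_y^2 u$ carries a factor $\eta^2$ and thus contributes nothing to the $\eta = 0$ modes. Combined with the previous step, $v$ satisfies the free linear KdV equation
\begin{equation*}
\pa_t v + \pa_x^3 v = 0, \qquad v(0,\cdot) = \int_{\T} u_0(\cdot, y)\,dy.
\end{equation*}
Since $u(t,\cdot)\in L_0^2(\T^2)$, one has $\widehat{v}(t,0) = \widehat{u}(t,0,0) = 0$, so $v(t,\cdot)$ is zero-mean. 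The linear KdV group $e^{-t\pa_x^3}$ is unitary on such data, so $v(T,\cdot) = e^{-T\pa_x^3}v(0,\cdot)$ is entirely determined by $u_0$, irrespective of $h$. This yields a rigid compatibility condition on any reachable terminal state.

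To conclude, take $u_0 \equiv 0$ and $u_1(x,y) := \cos x$. Both lie in $L_0^2(\T^2)$ since $\widehat{u_1}(0,\eta)$ vanishes for every $\eta$; on the other hand, $\int_{\T} u_1(x,y)\,dy = 2\pi\cos x \not\equiv 0$. The compatibility condition above forces $v(T,\cdot) \equiv 0$ for every admissible $h$, so $u(T) \neq u_1$, proving the theorem. The only technicality is justifying the interchange of $\int\cdot\,dy$ with the Fourier multiplier $\pa_x^{-1}$, which is transparent on the Fourier side and presents no genuine obstacle; the whole proof pivots on the identity $\int_{\T}\mathcal{G}_{\para} h\,dy = 0$, which traps the horizontal control in the complement of the one-dimensional KdV subsystem governing the $\eta = 0$ modes.
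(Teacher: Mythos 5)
Your proof is correct, and it is a genuinely different and considerably more elementary route than the paper's. The paper disproves the observability inequality by constructing a semiclassical Schr\"odinger wave packet $v_n(t,y)$ concentrated at $y$-frequency $\sim h_n^{-1}$, lifting it to a KP-II solution $u_n = v_n(t,y)e^{ix/h_n}e^{it/h_n^3}$, and running a Poisson-summation/non-stationary-phase argument to show the packet stays away from the horizontal strip for a time $O(1)$; non-controllability then follows from HUM. You instead extract the algebraic identity $\int_{\T}\mathcal{G}_{\para}h\,dy=0$ (forced by $\int_\T g=1$), observe that the $\eta=0$ Fourier slice $v(t,x)=\int_\T u(t,x,y)\,dy$ therefore evolves by the free, unitary Airy group $e^{-t\partial_x^3}$ on mean-zero data independently of $h$, and hand over the explicit unreachable target $u_1=\cos x$. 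This is a clean, one-line closed invariant and it proves the theorem as stated with no microlocal machinery. What the paper's heavier argument buys is robustness and geometric content: your obstruction lives entirely in the degenerate $\eta=0$ subspace and hinges on the particular mean-zero normalization built into $\mathcal{G}_{\para}$, whereas the paper's wave packet has $y$-frequency tending to infinity and fails to be observed because the transverse group velocity $\partial_\eta(\eta^2/\xi)=2\eta/\xi$ stays $O(1)$ at the chosen scale, so the packet genuinely fails to propagate into the control strip. In particular, the paper's construction would still defeat controllability for the system quotiented by the $\eta=0$ modes, or for a differently normalized control operator, while your invariant would vanish. Worth noting: your example also kills the observability inequality outright, since $u(t,x,y)=e^{i(x+t)}$ solves the free equation with $\mathcal{G}_{\para}u\equiv 0$, so the two proofs are not even using observability failure of different strengths---yours is simply the degenerate instance of it. As a minor point of hygiene, you should state explicitly that $\widehat{u}(t,0,0)$ is conserved (integrate the equation over $\T^2$ and use $\int_{\T^2}\mathcal{G}_{\para}h=0$) to justify that $v(t,\cdot)$ stays mean-zero in $x$, rather than just asserting $u(t)\in L_0^2(\T^2)$; once said, the unitarity of $e^{-t\partial_x^3}$ on $L_0^2(\T)$ is immediate.
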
 
The proofs of Theorem \ref{positive} and Theorem \ref{negative} rely on the propagation of singularities for the KP-II flow. It turns out that the propagation on the horizontal direction is much stronger than the vertical direction.  The heuristic is that the singularities will travel into some vertical control region in a very short time, however for a horizontal control region the singularities moves too slow to enter. This can be interpreted physically, since the KP equations describe the regime where the wavelengths in the transverse direction (in $y$) are much larger than
in the direction of propagation (in $x$). \\



The paper is organized as follows.
In Section 2,  some results of well-posedness are mentioned,  they will recover importance in the proof of the controllability of the full control system.  In Section 3, the linear controllability is established by proving the observability inequality. In Section 4, the local controllability of the nonlinear equation is proved by  fixed point arguments. In Section 5, we construct a counterexample to complete the proof of Theorem \ref{negative}.

\subsection*{Acknowledgement} The authors would like to thank Professor Gilles Lebeau, Ph.D. advisor of  author$^{1}$, for his productive discussions. The authors are grateful to Professor Lionel Rosier for his valuable comments on the previous version of this article.  

\section{Notations and Preliminaries}
Throughout this article, we use the identification $\mathbb{T}=\mathbb{R}/(2\pi\mathbb{Z})=[-\pi,\pi]\slash\mathbb{Z}_2$.   We will adapt the standard convention for constancy in PDE. The constant $C$ will denote a positive constant that can change from line to line and the denpendency will be specified if there is any risk of confusing.

We need the following classical inequality of Ingham.
\begin{proposition}[\cite{KL-book}]\label{newIngham}
	Let $(\omega_{k})_{k\in \Z}$ be a family of real numbers, satisfying the uniform gap condition
	$$ \gamma:=\inf_{k_1\neq k_2}|\omega_{k_1}-\omega_{k_2} |>0.
	$$ 
	If $I\subset \R$ is a bounded interval of length $|I|>\frac{2\pi}{\gamma}$, then there exists $C_{\gamma}>0$, depending only on $\gamma$ and the length $|I|$, such that for all $(a_k)_{k\in\Z}\subset l^2(\Z)$, we have 
	$$ \frac{1}{C_{\gamma}}\sum_{k\in\Z}|a_k|^2\leq \int_I\left|\sum_{k\in\Z}a_ke^{i\omega_k t} \right|^2dt\leq C_{\gamma}\sum_{k\in\Z}|a_k|^2.
	$$ 
\end{proposition}

Next we briefly review the Cauchy theory for KP-II following \cite{Saut}.
The initial value problem 
\begin{align} \label{kpII}
\begin{cases}    \partial_tu+\partial_x^3u+\partial_x^{-1}\partial^2_{y}u+ u \partial_{x}u=0,\quad
(t,x,y)\in\R\times\mathbb{T}^{2},
\\
u|_{t=0}=u_0\in L_0^2(\mathbb{T}^2),
\end{cases}
\end{align}
is proved in \cite{B93} by Bourgain to be  globally well-posed when $u_{0}\in H_0^{s}(\T^{2})$  for $s\ge 0$. \\

In \cite{B93}, Bourgain introduced a Fourier restriction norm
$$ \|u\|_{X^{s,b,b_1}}^2=\int_{\mathbb{R}}\sum_{(k,l)\in\mathbb{Z}^2}\left\langle\frac{\langle\sigma(\tau,k,l)\rangle}{\langle k\rangle^3}\right\rangle^{2b_1}\langle\sigma(\tau,k,l)\rangle^{2b}\langle(k,l)\rangle^{2s}|\widehat{u}(\tau,k,l)|^2d\tau,
$$
where $\sigma(\tau,k,l)=\tau-k^3+\frac{l^2}{k}$ and $\langle\cdot\rangle=\sqrt{1+|\cdot|^2}$. For $T>0$, the norm in the localized time interval $[0,T]$ is defined by
$$ \|u\|_{X_T^{s,b,b_1}}:=\inf\{\|w\|_{X^{s,b,b_1}}:w(t)=u(t) \, \mathrm{ on } \, (0,T)\}.
$$
Denote by $S(t)=e^{-it\left(\partial_x^3+\partial_x^{-1}\partial_y^2\right)}$ the linear semi-group, we have the following estimate:
\begin{proposition}\label{bilinear1}
	For $s\ge 0$, $-\frac{1}{2}<b'\leq 0<\frac{1}{2}<b\leq b'+1, \, b_{1}\in \R $ and $ T\leq 1$, we have
	$$  \left\|\int_0^tS(t-t')F(t')dt'\right\|_{X_T^{s,b,b_1}}\leq C T^{1-(b-b')}\|F\|_{X_T^{s,b',b_1}}.
	$$
	for any $F \in X_T^{s,b',b_1}$.
\end{proposition}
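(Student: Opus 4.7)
The estimate is a standard Bourgain-type linear bound for the Duhamel operator. I will follow the framework developed by Bourgain \cite{B93} and adapted to KP-II by Molinet-Saut-Tzvetkov \cite{Saut}, the only novelty being that the $b_1$-weight must be carried through.

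First, extend $F$ from $(0,T)$ to all of $\mathbb{R}$ losing at most a factor of $2$ in the $X^{s,b',b_1}$ norm. Pick $\phi\in C_c^\infty(\mathbb{R})$ equal to $1$ on $[-1,1]$ and supported in $[-2,2]$, set $\phi_T(t):=\phi(t/T)$, and observe that since $T\leq 1$ and $\phi_T\equiv 1$ on $[0,T]$, it suffices to bound the unlocalized norm of
$$ u_F(t,x,y) := \phi_T(t)\int_0^t S(t-t')F(t')\,dt'. $$
Passing to the space-time Fourier side and setting $p(k,l):=k^3-l^2/k$ (so that $\sigma=\tau-p(k,l)$ matches the paper's convention), a direct computation gives
$$ \widehat{u_F}(\tau,k,l) = \mathcal{F}_t\!\Bigl[e^{itp(k,l)}\phi_T(t)\int_{\mathbb{R}} \widehat{F}(\sigma'+p(k,l),k,l)\,\frac{e^{i\sigma' t}-1}{i\sigma'}\,d\sigma'\Bigr](\tau). $$

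Next, introduce a cutoff $\psi\in C_c^\infty(\mathbb{R})$ equal to $1$ for $|s|\leq 1$ and supported in $|s|\leq 2$, and decompose
$$ \frac{e^{i\sigma t}-1}{i\sigma} = \psi(T\sigma)\sum_{n\geq 1}\frac{(it)^n\sigma^{n-1}}{n!}\;+\;(1-\psi(T\sigma))\frac{e^{i\sigma t}}{i\sigma}\;-\;(1-\psi(T\sigma))\frac{1}{i\sigma}, $$
yielding a splitting $u_F=u_1+u_2+u_3$. For each piece, combine the standard one-dimensional calculus estimates
$$ \|\phi_T(t)\,t^n\|_{H^b_t(\mathbb{R})}\lesssim T^{n+\frac{1}{2}-b},\qquad \|\phi_T(t)\,e^{it\sigma_0}\|_{H^b_t(\mathbb{R})}\lesssim T^{\frac{1}{2}-b}\langle\sigma_0\rangle^b, $$
(valid for $0<b<1$, $T\leq 1$) with Cauchy-Schwarz in $\sigma$ against the weight $\langle\sigma\rangle^{b'}$. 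On $u_1$ the support constraint $|T\sigma|\lesssim 1$ together with the convergence of the Taylor series in $t$ produces the $T^{1-(b-b')}$ factor, while on $u_2$ and $u_3$ the constraint $|T\sigma|\gtrsim 1$ converts the loss of $|\sigma|^{-1}$ into the same gain; here the full strength of the hypotheses $-\tfrac{1}{2}<b'\leq 0<\tfrac{1}{2}<b\leq b'+1$ is used.

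Finally, the $b_1$-weight $\langle\sigma/\langle k\rangle^3\rangle^{b_1}$ is carried through uniformly by splitting the $(k,l)$-summation into $\langle k\rangle^3\gtrsim 1/T$ and $\langle k\rangle^3\lesssim 1/T$. In the first regime the weight is $O(1)$ on the $\sigma$-support of each piece, while in the second it is comparable to $\langle\sigma\rangle^{b_1}$ and is absorbed into $\langle\sigma\rangle^{b'}$ at the cost of a constant depending on $b_1$. The spatial Sobolev weight $\langle(k,l)\rangle^{s}$ commutes with every step, so Plancherel and a discrete Minkowski inequality conclude. The main technical obstacle is verifying that the $T^{1-(b-b')}$ gain survives uniformly in each modulation and frequency regime once the $b_1$-weight is inserted; this is a routine but tedious case analysis, with no new ideas beyond the classical $X^{s,b}$ calculus.
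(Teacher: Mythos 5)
The paper itself does not prove this proposition --- it is stated and implicitly deferred to the literature (Bourgain \cite{B93}, cf.\ \cite{Tao}, \cite{Saut}) --- so your attempt must stand on its own. Your skeleton is exactly the standard one: extend $F$, insert the cutoff $\phi_T$, pass to the time-Fourier side, split $\frac{e^{i\sigma t}-1}{i\sigma}$ via $\psi(T\sigma)$ into the low-modulation Taylor piece and the two high-modulation pieces, and invoke $\|\phi_T t^n\|_{H^b}\lesssim T^{n+1/2-b}$ and $\|\phi_T e^{it\sigma_0}\|_{H^b}\lesssim T^{1/2-b}\langle\sigma_0\rangle^b$. This is correct and takes care of the pure $X^{s,b}$ content.

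The last paragraph, where the $b_{1}$-weight is incorporated, contains concrete gaps. In the regime $\langle k\rangle^{3}\gtrsim 1/T$ you assert that $\langle\langle\sigma\rangle/\langle k\rangle^{3}\rangle^{b_{1}}$ is $O(1)$ ``on the $\sigma$-support of each piece''; that is fine for $u_{1}$ and $u_{3}$, whose output modulations are $\lesssim 1/T\lesssim\langle k\rangle^{3}$, but it is false for $u_{2}$, whose output $\sigma$ ranges over all $|\sigma|\gtrsim 1/T$ and on which, for $b_{1}>0$, the weight grows without bound. The statement that actually does the work is that $\phi_T$ only shifts modulations by $O(1/T)\lesssim\langle k\rangle^{3}$, so the weight evaluated at the output modulation is \emph{comparable} to the weight at the input modulation and the $b_{1}$-factors cancel between the two sides of the inequality --- not that the weight itself is bounded. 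In the complementary regime $\langle k\rangle^{3}\lesssim 1/T$, the claim that the weight is ``comparable to $\langle\sigma\rangle^{b_{1}}$ and is absorbed into $\langle\sigma\rangle^{b'}$ at the cost of a constant depending on $b_{1}$'' does not hold: the two differ by a factor $\langle k\rangle^{-3b_{1}}$, which is $k$- and hence $T$-dependent in this regime, and for $b_{1}>0$ and $b'\leq 0$ a positive power of $\langle\sigma\rangle$ cannot be absorbed into a non-positive one. This is precisely the delicate regime, because $\phi_T$ can move a modulation from $\lesssim\langle k\rangle^{3}$ (where the weight is $\sim 1$) up to $\sim 1/T$ (where it is $\sim(T\langle k\rangle^{3})^{-b_{1}}$), producing an apparent loss $(T\langle k\rangle^{3})^{-|b_{1}|}$ on the output side; one must check, by explicitly tracking powers of $T\langle k\rangle^{3}$ through the Cauchy--Schwarz on the input side, that a compensating factor $(T\langle k\rangle^{3})^{|b_{1}|}$ appears and the two cancel. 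Your write-up does not perform that bookkeeping, so uniformity of the constant in $(k,l)$ and $T$ is not actually established.
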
 
The proposition above is false for the end points $b'=-\frac{1}{2}$ and $b=\frac{1}{2}$. However, for periodic problem, it seems that we cannot avoid to use these end points. The way to resolve this issue is to define an auxiliary norm
$$ \|u\|_{Z^{s,b}}:=\|\langle\sigma\rangle^{b-\frac{1}{2}}\langle(k,l)\rangle^{s}\widehat{u}\|_{l_{(k,l)}^2L_{\tau}^1}.
$$
We denote by $Z_T^{b,s}$ the restricted spaces, defined in the same manner. The analogue of Proposition \ref{bilinear1} is as follows:
\begin{proposition}\label{linearestimate} {Under the same conditions as in Proposition \ref{bilinear1}}, we have
	$$\left\|S(t)u_0+\int_0^tS(t-t')F(t')dt'\right\|_{X_T^{s,\frac{1}{2},b_1}\cap Z_T^{s,\frac{1}{2}}}\leq C \|u_0\|_{H^s}+C\|F\|_{X_T^{s,-\frac{1}{2},b_1}\cap Z_T^{s,-\frac{1}{2}}}.$$
\end{proposition}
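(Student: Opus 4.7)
The plan is to adapt the standard Bourgain-space linear estimate to the KP-II dispersion relation $\phi(k,l) = k^3 - l^2/k$, following closely the scheme developed in \cite{Saut}. First, I would fix a smooth temporal cutoff $\psi \in C_c^\infty(\R)$ with $\psi \equiv 1$ on $[-1,1]$ and replace the solution by $\psi(t)$ times its Duhamel representation; since $T \leq 1$, the global $X^{s,\frac12,b_1}$ and $Z^{s,\frac12}$ norms of these cutoff-extended objects dominate the restricted norms on $[0,T]$, so it suffices to prove the estimate without the time restriction.

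For the homogeneous contribution, a direct time-Fourier computation yields
$$\mathcal{F}_t[\psi(t)S(t)u_0](\tau,k,l) = \widehat{\psi}(\sigma(\tau,k,l))\,\widehat{u_0}(k,l).$$
Since $\widehat{\psi}$ is Schwartz, all weighted $\tau$-integrals appearing in the $X^{s,\frac12,b_1}$ and $Z^{s,\frac12}$ norms are uniformly bounded in $(k,l)$, and Plancherel in the spatial frequencies gives the required $\|u_0\|_{H^s}$ bound.

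For the Duhamel term, I would expand $F$ in its time-Fourier representation and carry out the $t'$-integration explicitly to obtain the kernel
$$\frac{e^{it\tau} - e^{it\phi(k,l)}}{i\,\sigma(\tau,k,l)}.$$
Then I would split the $\tau$-integral into the high-modulation region $|\sigma|\geq 1$ and the low-modulation region $|\sigma|\leq 1$. In the high-modulation region, the factor $1/\sigma$ supplies the $\langle\sigma\rangle^{-1}$ gain that converts the $X^{s,-\frac12,b_1}$ norm of $F$ into the $X^{s,\frac12,b_1}$ norm of the output, while the second exponential becomes a free evolution whose initial datum is bounded in $H^s$ by a mixture of the $X^{s,-\frac12,b_1}$ and $Z^{s,-\frac12}$ norms of $F$ (the latter absorbs the logarithmic Cauchy--Schwarz divergence intrinsic to the endpoint). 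In the low-modulation region, the Taylor expansion $e^{it\sigma} - 1 = it\sigma + O((t\sigma)^2)$ removes the singular denominator, and estimating in $\ell^2_{(k,l)}L^1_\tau$ naturally produces the $Z^{s,\frac12}$ output norm, controlled by the $Z^{s,-\frac12}$ input norm.

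The delicate point --- and the reason both the $X^{s,\pm\frac12,b_1}$ and $Z^{s,\pm\frac12}$ components must appear on each side --- is the endpoint nature of the estimate at $b = \frac12$, $b' = -\frac12$. For non-endpoint exponents $b > \frac12$ the pure $X^{s,b,b_1}$ statement would close without any auxiliary norms, but those exponents are unavoidable in the periodic setting since the bilinear estimates controlling the KP-II nonlinearity require precisely $b = \frac12$. The $Z$ norms, whose $\ell^2 L^1_\tau$ structure corresponds to an $L^\infty_t H^s_x$-type control, are tailored to recover continuity of the Duhamel operator at the endpoint, and the careful bookkeeping between the low- and high-modulation regions to route each piece into the correct output component is the main technical hurdle.
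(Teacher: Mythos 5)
The paper itself does not prove Proposition \ref{linearestimate}; it defers to \cite{Tao} (and implicitly to the standard Bourgain / Ginibre--Tsutsumi--Velo endpoint lemma adapted to the KP-II phase). Your sketch --- temporal cutoff reduction, explicit evaluation of the Duhamel kernel $(e^{it\tau}-e^{it\phi})/(i\sigma)$, the split into $|\sigma|\gtrless 1$, with the $Z^{s,-\frac12}$ norm of $F$ absorbing the logarithmic Cauchy--Schwarz loss in the free-evolution piece coming from the high-modulation region, and the Taylor expansion handling the low-modulation piece --- is exactly that standard argument, so it matches the approach the paper references.
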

The proof can be found, for example in \cite{Tao}. In order to show that the equation \eqref{kpII} is locally well-posed in the Fourier restriction spaces, we write it in the integral form: 
\begin{equation}\label{int_sol}
u(t)= S(t) u_{0}+ \int_{0}^{t} S(t-t') (u\partial_{x}u)(t') dt'.
\end{equation}
To use the fixed point argument, the following bilinear estimate is crucial:
\begin{proposition}[see \cite{Saut}]\label{bilinear}
	There exist $\frac{1}{4}<b_1<\frac{3}{8}$, $C>0$ and $\nu>0$ such that for all $0<T\leq 1, s\geq 0$, the following bilinear estimate holds
	$$ \|\partial_x(uv)\|_{X_T^{s,-\frac{1}{2},b_1}\cap Z_T^{s,-\frac{1}{2}}}\leq C T^{\nu} \|u\|_{X_T^{s,\frac{1}{2},b_1}}\|v\|_{X_T^{s,\frac{1}{2},b_1}}
	$$
	for functions $u,v\in X_T^{s,\frac{1}{2},b_1}$ satisfying
	$$ \int_{\mathbb{T}}u(t,x,y)dx=\int_{\mathbb{T}}v(t,x,y)dx=0.
	$$
	
\end{proposition}
This bilinear estimate is established by Bourgain in \cite{B93}. We use the adapted version of \cite{Saut}, in which the authors dealt with partially periodic data.

\section{Linear controllability on vertical strip}

In this section, the study of the internal controllability of linear system  \eqref{exactcontrollinear} is addressed by defining a linear operator in Proposition \ref{controloperator}, which characterizes the  control input of the linear system and drives the solution from an initial state $u_{0}$ to a final state $u_{1}$. Notice that by reversibility, the exact controllability is equivalent to null controllability:   given any initial state $u_0\in L_0^2$, find a function $h\in L^2((0,T)\times\mathbb{T}^2)$ so that the equation
satisfies $u(0,\cdot)=u_0$ and $u(T,\cdot)=0$. Hence, we will study the null controllability.\\

The classical strategy to study the null controllability is to show the observability inequality for the adjoint system associated to the equation, in the KP-II case, it  matches with the homogeneous linearized KP-II equation:
\begin{align} \label{linearcontrol}
\begin{cases}    \partial_tu+\partial_x^3u+\partial_x^{-1}\partial_y^2u =0,\quad
(t,x,y)\in\R\times\mathbb{T}^2,
\\
u|_{t=0}=u_0\in L_0^2(\mathbb{T}^2), 
\end{cases}
\end{align}

From the classical Hilbert Uniqueness Method (HUM), one can deduce that the null controllability is equivalent to the observability for its adjoint system.
\begin{proposition}[See \cite{Li}]\label{HUM}
	Given $T>0$, the system \eqref{exactcontrollinear}  is null controllable at $T$ if and only if given $u_0\in L^2(\mathbb{T}^2)$, there exists a unique solution $u$ to \eqref{linearcontrol}  such that 
	\begin{equation}\label{observability2D}
	\|u_0\|_{L^2(\mathbb{T}^2)}^2\leq C_T\int_0^T\int_{\mathbb{T}^2}|\mathcal{G}u(t,x,y)|^2dxdydt,
	\end{equation}
where the constant $C_T>0$ does not depend on $u_0$. 
\end{proposition}
The region where the control will be placed is a vertical strip given by  $$\omega:=]a,b[\times \T$$
and the operator $\mathcal{G}$ is given by \eqref{verticalcontrol}.
The region $\omega$ will allow us to get a reduction of the KP-II equation \eqref{linearcontrol} in one dimension. Indeed,
	by expanding the solution $u(t,x,y)$ to \eqref{linearcontrol} in Fourier series in $y$ variable
	$$ u(t,x,y)=\sum_{l\in\mathbb{Z}}a_{l}(t,x)e^{ily},
	$$
	we find that for each $l\in\mathbb{Z}$, $a_l$ satisfies the equation
	$$ \partial_ta_l+\partial_x^3a_l-l^2\partial_x^{-1}a_l=0.
	$$
	Therefore, by changing the notation, it is reduced to the study of following $\lambda$-dependent equations
	\begin{align} \label{1Dequation}
	\begin{cases}    \partial_tu+\partial_x^3u-\lambda^2\partial_x^{-1}u=0,\quad
	(t,x)\in\R\times\mathbb{T},
	\\
	u|_{t=0}=u_0\in L_0^2(\mathbb{T}).
	\end{cases}
	\end{align}

\subsection{Observability inequality}
Due to  Proposition \ref{HUM}, the proof of Theorem \ref{positive} is reduced to the proof of \eqref{observability2D}. From the one dimensional reduction and Plancherel's Theorem, we can further reduce the observability \eqref{observability2D} to the following uniform observability for the family of equations \eqref{1Dequation}.
\begin{proposition}\label{1Dobservability}
Given $T>0$,  there exists $C_T>0$ such that for all $\lambda>0$, 
	\begin{equation}\label{observable1D}
	\|u_0\|_{L^2(\mathbb{T})}^2\leq C_T\int_0^T\int_{\mathbb{T}}|\mathcal{G}u(t,x)|^2dxdt
	\end{equation}
	holds for all solution $u$ of \eqref{1Dequation}.
\end{proposition}

The rest of this section is devoted to the proof of Proposition \ref{1Dobservability}. The strategy is as follows. First,  we reduce the inequality \eqref{observable1D} to a weaker one \eqref{observabilityuniform1}, which  is the observability for high frequencies and  it does not consider the normalization part  which simplify the operator $\mathcal{G}$. Next, inspired by the work of Lebeau in \cite{Lebeau}, we rescale the time to change it to the semi-classical scale. This reduces the weak observability for system \eqref{1Dequation} to an inequality of the same form but for another semi-classical system \eqref{semiclassical1}. The third step is to reduce the inequality in the previous step to a frequency-localized one. Finally, we use the propagation argument to prove the frequency-localized semi-classical observability, namely \eqref{singlefrequency}. 

\subsubsection{\bf Reduction to weak observability}

The weak observability takes the form, uniformly in $\lambda\geq 0$,
\begin{equation}\label{observabilityuniform1}
\|u_0\|_{L^2(\mathbb{T})}^2\leq C_T\int_0^T\int_{\mathbb{T}}|g(x)u(t,x)|^2dxdt+C\|u_0\|_{H^{-1}(\mathbb{T})}^2.
\end{equation}

First, we prove a lemma concerning about the commutator of a high-frequency cut-off and the operator $\mathcal{G}$.
\begin{lemma}\label{commmutator}
	Take $\chi\in C^{\infty}(\mathbb{R})$ with $\mathrm{supp }(\chi)\subset \{|\xi|> 1\}$ and $\chi|_{|\xi|\geq 2}=1$. Then there exist $h_0>0, C>0$ such that for all $0<h<h_0$, we have
	$$ \int_0^T\|[\chi(hD_x),\mathcal{G}]u(t,\cdot)\|_{L^2(\mathbb{T})}^2dt\leq Ch^2\|u(0)\|_{L^2(\mathbb{T})}^2.
	$$ 
\end{lemma}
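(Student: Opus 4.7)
The plan is to decompose $\mathcal{G}$ algebraically into simple pieces, expand the commutator, bound each piece separately in $L^2(\T)$, and finally integrate in time using $L^2$-conservation for the flow \eqref{1Dequation}.

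Writing $Pu := \int_{\T} g(x')u(x')\,dx'$, the operator factors as $\mathcal{G} = M_g(I - P)$, where $M_g$ denotes multiplication by $g$ and $P$ is the rank-one operator just defined. Using $[A,BC] = [A,B]C + B[A,C]$, the commutator expands as
\begin{equation*}
[\chi(hD_x), \mathcal{G}] = [\chi(hD_x), M_g](I-P) - M_g[\chi(hD_x), P].
\end{equation*}
The hypothesis $\mathrm{supp}(\chi)\subset\{|\xi|>1\}$ forces $\chi(0)=0$, so $\chi(hD_x)$ annihilates constants on $\T$. Since $Pu$ is a constant and $\chi$ is real (so $\chi(hD_x)$ is self-adjoint on $L^2(\T)$), one readily computes
\begin{equation*}
[\chi(hD_x), M_g]Pu = (Pu)\,\chi(hD_x)g, \qquad [\chi(hD_x), P]u = -\langle \chi(hD_x)g,\,u\rangle,
\end{equation*}
so that
\begin{equation*}
[\chi(hD_x), \mathcal{G}]u = [\chi(hD_x), M_g]u - (Pu)\,\chi(hD_x)g + g\,\langle \chi(hD_x)g,\,u\rangle.
\end{equation*}

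Next I would estimate each of the three pieces in $L^2(\T)$. The first is the standard semiclassical commutator: realizing $\chi(hD_x)$ as convolution against $h^{-1}K(\cdot/h)$ with $K = \mathcal{F}^{-1}\chi$ Schwartz (on $\T$, via periodization), and using $|g(x)-g(y)| \leq \|g'\|_{L^\infty}|x-y|$, Young's inequality yields $\|[\chi(hD_x), M_g]\|_{L^2\to L^2} \leq Ch$. For the two rank-one pieces, the key observation is that $\chi(h\xi)$ is supported in $|\xi| \geq 1/h$, so
\begin{equation*}
\|\chi(hD_x)g\|_{L^2(\T)}^2 \leq \sum_{|k|\geq 1/h}|\widehat{g}(k)|^2 \leq h^2\|g\|_{H^1(\T)}^2,
\end{equation*}
and together with $|Pu|\leq\|g\|_{L^2}\|u\|_{L^2}$ and Cauchy--Schwarz this bounds both rank-one terms by $Ch\|u\|_{L^2}$. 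Summing gives the pointwise estimate $\|[\chi(hD_x),\mathcal{G}]u(t,\cdot)\|_{L^2} \leq Ch\|u(t,\cdot)\|_{L^2}$.

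Finally, I would square, integrate over $[0,T]$, and invoke that the spatial symbol $i\xi^3 - i\lambda^2\xi^{-1}$ is purely imaginary on $\Z\setminus\{0\}$, so the flow of \eqref{1Dequation} is unitary on $L_0^2(\T)$ uniformly in $\lambda$. This gives $\|u(t,\cdot)\|_{L^2}=\|u(0,\cdot)\|_{L^2}$ and hence the claimed bound $C h^2\|u(0)\|_{L^2}^2$, with $C$ independent of $h$ and $\lambda$. No real obstacle presents itself: the argument is essentially bookkeeping plus the classical semiclassical commutator bound and a frequency-support estimate on the smooth function $g$.
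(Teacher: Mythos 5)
Your decomposition $\mathcal{G} = M_g(I-P)$ with $P$ the rank-one averaging operator, and the resulting split $[\chi(hD_x),\mathcal{G}]u = [\chi(hD_x),M_g]u - (Pu)\chi(hD_x)g + g\langle\chi(hD_x)g,u\rangle$, is algebraically identical to the paper's split into a pure commutator term $I = [g,\chi(hD_x)]u$ and a rank-one remainder $II$ (the two operator differences grouped as $II$ in the paper are precisely your last two terms). Both you and the paper dispose of $I$ via the standard semiclassical bound $\|[g,\chi(hD_x)]\|_{L^2\to L^2}\leq Ch$ plus $L^2$-conservation of the $\lambda$-flow. Where you genuinely diverge is in the rank-one piece: the paper writes out the Fourier coefficients of $II$ and exploits cancellation through $|\chi(hl_1)-\chi(hl)|\leq\|\chi'\|_{L^\infty}h|l_1-l|$, closing the sum using $g\in C_c^2$; you instead bound each rank-one term separately by the frequency-support observation that $\chi(h\xi)$ lives in $\{|\xi|>1/h\}$, so $\|\chi(hD_x)g\|_{L^2}\leq h\|g\|_{H^1}$ outright. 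Your version is shorter and uses less regularity on $g$; the paper's cancellation argument has the (unneeded here) advantage that it would survive even if $\chi$ did not vanish near the origin, whereas yours essentially requires $\chi(0)=0$ — harmless, since the stated hypothesis forces it.

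One small slip worth flagging: $\mathcal{F}^{-1}\chi$ is \emph{not} Schwartz, since $\chi\to 1$ at infinity; $\chi(hD_x)$ on $\T$ does not have a nice convolution kernel. The fix is to write $[\chi(hD_x),M_g] = -[(1-\chi)(hD_x),M_g]$ and run your convolution-kernel/Lipschitz argument with $K=\mathcal{F}^{-1}(1-\chi)$, which \emph{is} Schwartz — this is exactly what the hypothesis $1-\chi\in C_c^\infty(\mathbb{R})$ is there for. With that correction the argument is complete, and your closing use of skew-adjointness of $\partial_x^3-\lambda^2\partial_x^{-1}$ on $L_0^2(\T)$ to get $\|u(t)\|_{L^2}=\|u(0)\|_{L^2}$ uniformly in $\lambda$ matches the paper.
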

\begin{proof}
	We write
	$$ \int_0^T\|[\chi(hD_x),\mathcal{G}]u(t,\cdot)\|_{L^2(\mathbb{T})}^2dt\leq C(\mathrm{I}+\mathrm{II}),
	$$
	where
	\begin{equation*}
	\begin{split}
	&\mathrm{I}=\int_0^T\int_{\mathbb{T}}|[g(x),\chi(hD_x)]u(t,x,y)|^2dxdt,\\
	&\mathrm{II}=\int_0^T\int_{\mathbb{T}}\left|g(x)\int_{\mathbb{T}}g(x')\chi(hD_x)u(t,x')dx'-\chi(hD_x)\left(g(x)\int_{\mathbb{T}}g(x')u(t,x')dx'\right)\right|^2dxdt.
	\end{split}
	\end{equation*}
From symbolic calculus\footnote{Though $g$ is not assumed to be smooth, the following estimate  is still valid.}, we have
	$$ \|[g(x),\chi(hD_x)]\|_{L^2\rightarrow L^2}\leq Ch,
	$$
	and by conservation of $L^2$ norm, we have
	$$ \mathrm{I}\leq Ch^2\int_0^T\|u(t)\|_{L^2(\mathbb{T})}^2dt=Ch^2T\|u(0)\|_{L^2(\mathbb{T})}^2.
	$$
	
	For II, we first calculate (to simplify the notation, we omit the variable $t$ here)
	\begin{equation*}
	\begin{split}
	&\left(g(x)\int_{\mathbb{T}}g(x')(\chi(hD_x)u)(x')dx'\right)^{\widehat{}}(l)-\chi(hD_x)\left(g(x)\int_{\mathbb{T}}g(x')u(x')dx'\right)^{\widehat{}}(l)\\
	=&\widehat{g}(l)\sum_{l_1\neq 0}\left(\chi(hl_1)-\chi(hl)\right)\widehat{g}(l_1)\widehat{u}(l).
	\end{split}
	\end{equation*}
	Since
	$ |\chi(hl_1)-\chi(hl)|\leq \|\chi'\|_{L^{\infty}}h|l_1-l|,
	$
	we have
	\begin{equation*}
	\begin{split}
	\mathrm{II}\leq &Ch^2\sum_{l}|\widehat{g}(l)|^2\left|\sum_{l_1\neq 0}|l_1-l|\widehat{g}(l_1)\widehat{u}(l_1)\right|^2\\
	\leq &Ch^2\sum_{l}|\widehat{g}(l)|^2\left(\sum_{l_1\neq 0}|l_1-l|^2|\widehat{g}(l_1)|^2\right)\left(\sum_{l_1\neq 0}|\widehat{u}(l_1)|^2\right)\\
	\leq &Ch^2\|u\|_{L^2(\mathbb{T})}^2\sum_{l,l_1\neq 0}|l_1-l|^2|\widehat{g}(l_1)|^2|\widehat{g}(l)|^2\\
	=&Ch^2\|u\|_{L^2(\mathbb{T})}^2,
	\end{split}
	\end{equation*}
	where we used the fact that $g\in C^2(\mathbb{T})$.
\end{proof}

\begin{proposition}\label{firstreduction}
	\eqref{observabilityuniform1} implies the following full observability inequality
	\begin{equation}\label{observabilityuniform2}
	\|u_0\|_{L^2(\mathbb{T})}^2\leq C_T\int_0^T\int_{\mathbb{T}}|\mathcal{G}u(t,x)|^2dxdt.
	\end{equation}
\end{proposition}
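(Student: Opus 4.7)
I will first trade $g(x)u$ for $\mathcal{G}u$ at the cost of a lower-order $H^{-1}$ remainder, and then absorb that remainder via a compactness-uniqueness argument resting on a spectral unique continuation property for \eqref{1Dequation}.

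Setting $c(t):=\int_{\mathbb{T}}g(x')u(t,x')\,dx'$, one has the identity $g(x)u(t,x)=\mathcal{G}u(t,x)+g(x)c(t)$. Since $u(t,\cdot)\in L_{0}^{2}(\mathbb{T})$ is mean-zero and $g\in C^{2}_{c}(\mathbb{T})\subset H^{1}(\mathbb{T})$, duality yields $|c(t)|=|\langle g,u(t)\rangle|\leq\|g\|_{H^{1}}\|u(t)\|_{H^{-1}}$; because the evolution \eqref{1Dequation} is a constant-coefficient skew-adjoint Fourier multiplier in $x$, every $H^{s}$-norm of $u$ is conserved, so $|c(t)|^{2}\lesssim\|u_{0}\|_{H^{-1}}^{2}$. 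Integrating the elementary bound $|gu|^{2}\leq 2|\mathcal{G}u|^{2}+2|g(x)c(t)|^{2}$ over $(0,T)\times\mathbb{T}$ and inserting the result into the hypothesis \eqref{observabilityuniform1} yields
\begin{equation*}
\|u_{0}\|_{L^{2}}^{2}\;\leq\;C\int_{0}^{T}\!\!\int_{\mathbb{T}}|\mathcal{G}u|^{2}\,dx\,dt+C\|u_{0}\|_{H^{-1}}^{2}.\qquad(\star)
\end{equation*}

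To remove the $H^{-1}$ remainder I argue by contradiction. If \eqref{observabilityuniform2} failed uniformly in $\lambda$, one would obtain sequences $\lambda_{n}\geq 0$ and $u_{0}^{n}\in L_{0}^{2}(\mathbb{T})$ with $\|u_{0}^{n}\|_{L^{2}}=1$ and $\int_{0}^{T}\!\int_{\mathbb{T}}|\mathcal{G}u^{n}|^{2}\to 0$. The compact embedding $L_{0}^{2}\hookrightarrow H_{0}^{-1}$ combined with $(\star)$ would furnish, after extracting subsequences, a limit $u_{0}^{\infty}\in L_{0}^{2}$ with $\|u_{0}^{\infty}\|_{H^{-1}}>0$ whose associated solution (with limiting parameter $\lambda_{\infty}$) satisfies $\mathcal{G}u^{\infty}\equiv 0$ on $(0,T)\times\mathbb{T}$. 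The crux of the argument is then the \emph{unique continuation} claim $u_{0}^{\infty}=0$: expanding $u^{\infty}(t,x)=\sum_{k\neq 0}\hat{u}_{0}^{\infty}(k)e^{i\omega(k)t}e^{ikx}$ and $c(t)=2\pi\sum_{k\neq 0}\overline{\hat{g}(k)}\,\hat{u}_{0}^{\infty}(k)e^{i\omega(k)t}$, with $\omega(k)=k^{3}-\lambda_{\infty}^{2}/k$, the identity $u^{\infty}(t,x)=c(t)$ on $(0,T)\times(a,b)$ becomes
$$\sum_{k\neq 0}\hat{u}_{0}^{\infty}(k)\bigl[e^{ikx}-2\pi\overline{\hat{g}(k)}\bigr]e^{i\omega(k)t}=0\quad\text{on }(0,T)\times(a,b).$$
Since $\omega$ is asymptotically strictly monotone, only finitely many modes share any given phase; grouping by common phase and invoking the linear independence of distinct exponentials in $t$, each group-sum must vanish on $(a,b)$, and, being a trigonometric polynomial with only nonzero frequencies minus a constant, it forces $\hat{u}_{0}^{\infty}(k)=0$ for every $k\neq 0$, delivering the desired contradiction.

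I expect the main obstacle to be the borderline regime $\lambda_{n}\to\infty$, in which the naive passage to the limit loses control of the phases $\lambda_{n}^{2}/k$. To handle it, I will either perform a diagonal extraction so that $\lambda_{n}^{2}/k\to\mu_{k}$ for each $k\neq 0$---which permits a variant of the spectral argument above to close the contradiction with $\omega_{\infty}(k)=k^{3}-\mu_{k}$---or else invoke Ingham's inequality (Proposition~\ref{Ingham}) to obtain \eqref{observabilityuniform2} directly for all sufficiently large $\lambda_{n}$, after verifying that the spectral gaps of the family $\{k^{3}-\lambda^{2}/k\}_{k\neq 0}$ remain uniformly bounded below for $\lambda\geq\Lambda_{0}$.
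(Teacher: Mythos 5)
Your reduction $(\star)$ is correct and mirrors the spirit of the paper's first step: the identity $g(x)u=\mathcal{G}u+g(x)c(t)$ with $c(t)=\int g\,u$, the bound $|c(t)|\lesssim\|u(t)\|_{H^{-1}}=\|u_0\|_{H^{-1}}$ (all Sobolev norms are conserved since the flow is a unimodular Fourier multiplier in $x$), and the compactness--uniqueness scheme are all sound. The unique continuation argument for the limiting case $\lambda_n\to\lambda_\infty<\infty$ also works, although it is phrased differently from the paper: the paper first upgrades $u|_\omega=c(t)$ to $u|_\omega=0$ by reading the equation $\partial_x(\partial_t u+\partial_x^3u)+\lambda^2u=0$ on $\omega$, then observes that the null space $\mathcal{N}$ of initial data whose solution vanishes on $\omega$ is finite-dimensional by \eqref{observabilityuniform1}, and kills it with the classical statement that a nonzero trigonometric polynomial cannot vanish on an interval. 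Your phase-grouping argument (each level set of $\omega(k)=k^3-\lambda_\infty^2/k$ has at most four integers, so each packet is a finite combination of $e^{ikx}$ with $k\neq0$ and a constant vanishing on $(a,b)$, hence zero by analyticity and independence) is an acceptable variant of the same unique continuation.

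The genuine gap is exactly where you flagged it: the regime $\lambda_n\to\infty$. Unfortunately neither of your two proposed escapes works. For route (i), diagonal extraction cannot make $\lambda_n^2/k$ converge to a finite $\mu_k$ when $\lambda_n\to\infty$; for every fixed $k\neq0$ the phase $\omega_n(k)=k^3-\lambda_n^2/k$ tends to $\pm\infty$, so $e^{i\omega_n(k)t}\rightharpoonup0$ weakly in $L^2(0,T)$ and the sequence $u_n$ has no ``associated limiting solution'' of any single $\lambda$-equation; in fact the weak limit of $u_n$ in $L^2((0,T)\times\mathbb{T})$ can be zero even though $u_0^\infty\neq0$. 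For route (ii), the spectral gap of $\{k^3-\lambda^2/k : k\neq0\}$ is \emph{not} uniformly bounded below for $\lambda\geq\Lambda_0$: the resonance equation $k^3-\lambda^2/k=m^3-\lambda^2/m$ has the positive solution $\lambda^2=-km(k^2+km+m^2)$ whenever $km<0$, e.g. $\omega_\lambda(k)=\omega_\lambda(-k)=0$ precisely when $\lambda=k^2$, so along the sequence $\lambda_n=n^2\to\infty$ there are exactly coinciding frequencies and Ingham (as stated in Proposition~\ref{Ingham}) is inapplicable. The paper's resolution is a frequency splitting that you did not foresee: write $u_n=u_n^{(\epsilon_0)}+v_n^{(\epsilon_0)}$ at the cutoff $|k|\sim1/\epsilon_0$. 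The high part $u_n^{(\epsilon_0)}$ is handled by the commutator Lemma~\ref{commmutator} and \eqref{observabilityuniform1} --- here the $H^{-1}$-remainder is automatically small because the data lives above frequency $1/\epsilon_0$. The low part $v_n^{(\epsilon_0)}$ contains only \emph{finitely many} Fourier modes $1\leq|k|\leq1/\epsilon_0$, whose pairwise gaps blow up like $\lambda_n^2$ as $n\to\infty$, so Ingham is applicable for $n$ large with \emph{any} prescribed $T>0$ and yields a lower bound $\int_0^T\|\mathcal{G}v_n^{(\epsilon_0)}\|^2\gtrsim c_0\|v_n^{(\epsilon_0)}(0)\|^2$ using $\widehat{g}(0)=1$. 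Combining the two pieces produces the contradiction. You would need to incorporate this high/low splitting (or an argument of comparable strength) to close the $\lambda_n\to\infty$ case.
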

\begin{proof}
	
	The proof is essentially a unique continuation  argument. However, it is more delicate since we need a uniform estimate with respect to  $\lambda$. The  proof will be divided into two steps.\\
	
	The first step is to show that for any fixed $\lambda>0$, \eqref{observabilityuniform2} holds with constant $C(\lambda)>0$ which may depend on $\lambda$. We argue by contradiction, assuming that \eqref{observabilityuniform2} is not true, then we can select a sequence $u_n$ of solutions to \eqref{1Dequation} so that 
	$$ \|u_n(0)\|_{L^2(\mathbb{T})}=1\quad \text{and} \quad \lim_{n\rightarrow\infty}\int_0^T\int_{\mathbb{T}}|\mathcal{G}u_n(t,x)|^2dxdt=0.
	$$ 
	Up to a subsequence, we may assume that $u_n(0)\rightharpoonup u_0$, weakly in $L^2(\mathbb{T})$. One can easily verify that $u_0\in L_0^2(\T)$. Moreover, from semi-group property, $u_n(t)\rightharpoonup
	u(t)$ weakly in $C([0,T];L^2(\mathbb{T}))$ and $u(t)$ is the distributional solution to \eqref{1Dequation} with initial data $u_0$. Since $\mathcal{G}: L_0^2(\T)\rightarrow L_0^2(\T)$ is a bounded operator, we have that $\mathcal{G}u(t,\cdot)=0 $ in $L_0^2(\mathbb{T})$ for a.e. $t\in[0,T]$. This means that $u(t,x)|_{\omega}=C(t)$ in $\mathcal{D}'(\omega)$ for a.e. $t\in[0,T]$. Moreover, from the strong continuity of the semi-group on $L_0^2(\T)$, 
	$$ C(t)=\int_{\mathbb{T}}g(x)u(t,x)dx,\forall t\in[0,T],
	$$ 
	and $C(t)$ is a continuous function in $t$. Therefore we have that
	$$ g(x)\left(u(t,x)-C(t)\right)=0, \textrm{ in }C([0,T];L^2(\mathbb{T})).
	$$
	Thus $u(t,x)|_{x\in\omega}=C(t)$ in $\mathcal{D}'(\omega)$ for all $t\in[0,T]$. Now, if we rewrite the equation \eqref{1Dequation} as $\partial_x(\partial_tu+\partial_x^3u)+\lambda u=0$ and evaluate $u$ for $x\in\omega$, we have that $u|_{\omega}=0$ in $\mathcal{D}'(\omega)$. Next we claim that $u\equiv 0$. Indeed,  following \cite{BLR}, we consider the following family of sets (depending on $T'$):
			$$ \mathcal{N}_{T'}:=\{u_0\in L_0^2(\mathbb{T}):S(t)u_0|_{\omega}=0,\forall t\in[0,T']\}.
			$$
			For any $T'>T/2$, applying inequality \eqref{observabilityuniform1} (with $T/2$), we have that for any $u_0\in \mathcal{N}_{T'}$,
			$$ \|u_0\|_{L^2(\T)}\leq C\|u_0\|_{H^{-1}(\mathbb{T})}.
			$$
			This implies that the subspace $\mathcal{N}_{T'}$ in $L_0^2(\mathbb{T})$ is finite dimensional. Moreover, $\mathcal{N}_{T_1}\subset \mathcal{N}_{T_2}$ if $T_1>T_2$. Now for $\delta>0$ small, $S(\delta): \mathcal{N}_{T'}\subset \mathcal{N}_{T'-\delta}$ is a linear mapping. Since for $T'>T/2$, dim$\mathcal{N}_{T'-\delta}<\infty$, there exists $\delta_0>0$ such that for all $\delta\leq \delta_0$, $\mathcal{N}_{T'-\delta}=\mathcal{N}_{T'}.$ Therefore, $(S(\delta)-\mathrm{Id})\delta^{-1}:\mathcal{N}_{T'}\rightarrow \mathcal{N}_{T'}$ is a linear mapping. Passing $\delta\rightarrow 0$, we have that $(\partial_tS(t))|_{t=0}:\mathcal{N}_{T'}\rightarrow \mathcal{N}_{T'}$. Denote by $\sigma$ and $v_0$, any of its eigenvalue and the corresponding eigenfunction of $\partial_tS(t)|_{t=0}$ on $\mathcal{N}_{T'}$, since $(\partial_{t}S(t)v_0)|_{t=0}=(-\partial_x^3+\lambda^2\partial_x^{-1})v_0$, we have 
			$$ (-\partial_x^3+\lambda^2\partial_x^{-1})v_0=\sigma v_0.
			$$
			This implies that $v_0$ has only finite number of non-vanishing Fourier modes. Thus $v_0$ has an analytic extension near the real axis. Therefore, $v_0|_{\omega}=0$ yileds $v_0\equiv 0$.
			Hence $\mathcal{N}_{T'}=\{0\}. $ 

	
	
	Since the weak limit of $u_n(0)$ is $0$, we have $$\int_{\mathbb{T}}g(x)u_n(t,x)dx\rightarrow 0 \text{ and } \|gu_n\|_{L^2([0,T]\times\mathbb{T})}\rightarrow 0.$$ Moreover, up to a subsequence, $\|u_n(0)\|_{H^{-1}(\mathbb{T})}\rightarrow 0$, due  to Rellich Theorem. This is a contradiction to the assumption that $\|u_n(0)\|_{L^2(\mathbb{T})}=1$.

	The second step is to prove that \eqref{observabilityuniform2} is uniformly on $\lambda$. Again, we assume that \eqref{observabilityuniform2} is not true. Then there exists a sequence of positive numbers $\lambda_n>0$ and a sequence of solutions $u_n$ to \eqref{1Dequation} with parameters $\lambda_n$ such that
	$$ \|u_n(0)\|_{L^2(\mathbb{T})}=1 \quad \text{and} \quad   \lim_{n\rightarrow \infty}\int_0^T\int_{\mathbb{T}}|\mathcal{G}u_n(t,x)|^2dxdt=0.
	$$
	Up to a subsequence, we may assume that $\lambda_n\rightarrow \lambda_{\infty}\in [0,\infty]$. Suppose $\lambda_{\infty}<\infty$, similar argument as in the first step  will lead to a contradiction. 

	The last possibility is   $\lambda_{\infty}=\infty$. We write
	$$ u_n(0)=\sum_{l\neq 0} a_{n,l}e^{ilx}
	$$ 
	and the corresponding solution of \eqref{1Dequation} is
	$$ u_n(t,x)=\sum_{l\neq 0}a_{n,l}e^{it\left(l^3-\frac{\lambda_n^2}{l}\right)}e^{ilx}.
	$$
	
	For any $\epsilon_0>0$, we set
	$$ u_{n}^{(\epsilon_0)}:=\sum_{|l|\geq \frac{1}{\epsilon_0}}a_{n,l}e^{it\left(l^3-\frac{\lambda_n^2}{l}\right)}e^{ilx},\quad v_n^{(\epsilon_0)}=u_n-u_n^{(\epsilon_0)}.
	$$
	From Lemma \ref{commmutator}, we have
	\begin{equation*}
	\begin{split}
	\int_0^T\|\mathcal{G}u_n^{(\epsilon_0)}(t)\|_{L^2(\mathbb{T})}^2dt\leq & C\epsilon_0^2\|u_n(0)\|_{L^2(\mathbb{T})}^2+C\int_0^T\|(\mathcal{G}u_n)^{(\epsilon_0)}(t)\|_{L^2(\mathbb{T})}^2dt.
	\end{split}
	\end{equation*} 
	Thus, there exists $C>0$ such that for any $\epsilon_0>0,$ we have 
	\begin{equation}\label{1}
	\begin{split}
	&\limsup_{n\rightarrow\infty}\int_0^T\|\mathcal{G}u_n^{(\epsilon_0)}(t)\|_{L^2(\mathbb{T})}^2dt\leq C\epsilon_0^2,\\
	&\limsup_{n\rightarrow\infty}\int_0^T\|\mathcal{G}v_n^{(\epsilon_0)}(t)\|_{L^2(\mathbb{T})}^2dt\leq C\epsilon_0^2.
	\end{split}
	\end{equation}
	For any $\epsilon>0$ small, we can find $\epsilon_0>0$  small enough such that 
	$$ \sum_{|l|\geq \frac{1}{\epsilon_0}}|\widehat{g}(l)|^2\leq\epsilon^2,
	$$
	and then
	$$ \left \|g(x)\int_{\mathbb{T}}g(x')u_n^{(\epsilon_0)}(t,x')dx'\right\|_{L^2(\mathbb{T})}^2\leq \epsilon^2\|g\|_{L^2(\mathbb{T})}^2\|u_n^{(\epsilon_0)}(0)\|_{L^2(\mathbb{T})}^2.
	$$
	Thus, from \eqref{observabilityuniform1},\begin{equation*}
	\begin{split}
	\|u_n^{(\epsilon_0)}(0)\|_{L^2(\mathbb{T})}^2\leq &\, C\epsilon^2+C\epsilon_0^2+\|u_n^{(\epsilon_0)}(0)\|_{H^{-1}(\mathbb{T})}^2\\
	\leq &\, C(\epsilon^2+\epsilon_0^2),
	\end{split}
	\end{equation*}
for $n$ large enough.\\
	
	On the other hand, direct calculation yields
	\begin{equation*}
	\begin{split}
	\int_0^T\|\mathcal{G}v_n^{(\epsilon_0)}(t)\|_{L^2(\mathbb{T})}^2dt=&\int_0^T\sum_l\left|\sum_{1\leq|l_1|\leq 1/\epsilon_0}(\widehat{g}(l-l_1)-\widehat{g}(l)\widehat{g}(l_1))a_{n,l_1}e^{it\left(l_1^3-\frac{\lambda_n^2}{l_1}\right)}\right|^2dt\\
	\geq & C\sum_l\sum_{1\leq|l_1|\leq 1/\epsilon_0}|\widehat{g}(l-l_1)-\widehat{g}(l)\widehat{g}(l_1)|^2|a_{n,l_1}|^2\\
	=&C\sum_{1\leq |l_1|\leq 1/\epsilon_0}c_{l_1}|a_{n,l_1}|^2
	\end{split}
	\end{equation*}
	with $c_{l_1}=\sum_l|\widehat{g}(l-l_1)-\widehat{g}(l)\widehat{g}(l_1)|^2$, by the Ingham inequality (Proposition \ref{newIngham}), due to the assumption that $\lambda_n\rightarrow\infty$. Notice that the constant $C$ can be chosen independent of $n$ and $\epsilon_0$, provided that if $n$ large enough then for any $l_1\neq l_2, 1\leq |l_1|,|l_2|\leq \frac{1}{\epsilon_0}$,
	\begin{equation*}
\Big|\big(l_1^3-\frac{\lambda_n^2}{l_1}\big)-\big(l_2^3-\frac{\lambda_n^2}{l_2}\big) \Big|=\Big|(l_1-l_2)\big(l_1^2+l_1l_2+l_2^2+\frac{\lambda_n^2}{l_1l_2}\big)\Big|\geq \gamma>0 \quad \text{and} \quad  T>\frac{2\pi}{\gamma}.
	\end{equation*}
	Note that $c_{l_1}\geq |\widehat{g}(0)-\widehat{g}(l_1)^2|^2$  and  $\widehat{g}(0)=1$, there exists a constant $c_0>0$, independent of $\epsilon_0, \epsilon$ and $n$, so that $c_{l_1}\geq c_0$ for all $1\leq |l_1|\leq 1/\epsilon_0$. Thus, for $n$ sufficiently large,
	$$ \|v_n^{(\epsilon_0)}(0)\|_{L^2(\mathbb{T})}^2\leq \frac{C}{c_0}\int_0^T\|\mathcal{G}v_n^{(\epsilon_0)}(t)\|_{L^2(\mathbb{T})}^2dt\leq C\epsilon_0^2.
	$$ 
	Therefore, 
	$$ 1=\limsup_{n\rightarrow\infty}\|u_n(0)\|_{L^2(\mathbb{T})}^2=\|u_n^{\epsilon_0}(0)\|_{L^2(\mathbb{T})}^2+\|v_n^{\epsilon_0}(0)\|_{L^2(\mathbb{T})}^2\leq C(\epsilon_0^2+\epsilon^2)<1,
	$$
	which cannot happen.
\end{proof}

\subsection{Reduction to semi-classical observability}

Now, we consider the semi-classical equation of the following form:
\begin{align} \label{semiclassical1}
\begin{cases}    h\partial_tu+(h\partial_x)^3u-(h\partial_x)^{-1}u=0,\quad
(t,x)\in\R\times\mathbb{T},
\\
u|_{t=0}=u_0\in L_0^2(\mathbb{T}).
\end{cases}
\end{align}

\begin{proposition}\label{semiclassicalreduction}
	Assume that there exist $T_0>0,h_0>0$ such that the following semi-classical observability
	\begin{equation}\label{semiclassicalobservability}
	\|u_0\|_{L^2(\mathbb{T})}^2\leq C_{T_0}\int_0^{T_0}\int_{\mathbb{T}}|g(x)u(t,x)|^2dxdt+C\|u_0\|_{H^{-1}(\mathbb{T})}^2
	\end{equation}
	holds for any solution $u$ of \eqref{semiclassical1} with initial data $u_0\in L_0^2(\T)$, uniformly for $0<h<h_0$. Then for any $T>0$, the observability inequality \eqref{observabilityuniform1} holds.
\end{proposition}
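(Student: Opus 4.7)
The plan is to perform a time rescaling that converts any solution $U$ of the $\lambda$-equation \eqref{1Dequation} into a solution $V$ of the semi-classical equation \eqref{semiclassical1}, so that the hypothesis \eqref{semiclassicalobservability} applies in the high-frequency regime $\lambda\gg 1$; the complementary range of bounded $\lambda$ is handled by a direct spectral/compactness argument.

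For $\lambda>1$ set $h:=\lambda^{-1/2}\in(0,1)$ and $V(s,x):=U(s/\lambda,x)$. A direct computation shows that $V$ solves \eqref{semiclassical1} with this parameter $h$, and the rescaling leaves every $H^\sigma(\T)$-norm of a time slice unchanged. Applying \eqref{semiclassicalobservability} to $V$ on $[0,T_0]$ and changing variables $s=\lambda t$ yields
\[
\|U_0\|_{L^2}^2 \leq C\lambda\int_0^{T_0/\lambda}\!\int_{\T}|g(x)U(t,x)|^2\,dx\,dt+C\|U_0\|_{H^{-1}}^2 .
\]
Since \eqref{1Dequation} is autonomous and its flow unitary on every $H^\sigma(\T)$, the same inequality holds with the window translated to any $[t_0,t_0+T_0/\lambda]\subset[0,T]$. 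Integrating in $t_0\in[0,T-T_0/\lambda]$ and applying Fubini bounds the cross-integral by $(T_0/\lambda)\int_0^T\int|gU|^2$; the two factors of $\lambda$ then cancel and division by $T-T_0/\lambda\geq T/2$ delivers \eqref{observabilityuniform1} with a constant uniform in $\lambda\geq\lambda_0:=\max(2,2T_0/T)$. For $\lambda\in[0,\lambda_0]$ the frequencies $\omega_k(\lambda)=k^3-\lambda^2/k$ have gaps $\gtrsim k^2$ at high $|k|$ (uniformly on this compact set), so splitting $U$ at a cutoff $K=K(T)$ and combining an Ingham inequality on the high-frequency part with the trivial equivalence of the $L^2$- and $H^{-1}$-norms on the finite-dimensional low-frequency part yields \eqref{observabilityuniform1} on this range for any $T>0$.

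The main obstacle is the Jacobian factor $\lambda$ produced by the change of variables $s=\lambda t$ in the rescaling step, which at first sight destroys any hope of uniformity as $\lambda\to\infty$. The crucial point is that the rescaling simultaneously shrinks the admissible observation window on the original time scale to length $T_0/\lambda$, and after translating this window inside $[0,T]$ and averaging over its start time a second factor $1/\lambda$ arises and cancels the Jacobian exactly, leaving an $O(1)$ constant on the full window $[0,T]$.
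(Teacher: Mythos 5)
Your proof is correct and takes essentially the same approach as the paper: set $h=\lambda^{-1/2}$, rescale time by $t\mapsto t/\lambda$ to turn a solution of \eqref{1Dequation} into a solution of \eqref{semiclassical1}, apply the hypothesis \eqref{semiclassicalobservability} to obtain observability on a window of length $T_0/\lambda$ with a Jacobian factor $\lambda$, and then use time-translation invariance and conservation of the $L^2$ and $H^{-1}$ norms to cover $[0,T]$ so that the factor $\lambda$ cancels. The only cosmetic differences are that you average continuously over window start times where the paper sums over a discrete cover of $\sim\epsilon_0 h^{-2}$ translates, and that you spell out the bounded-$\lambda$ regime via Ingham plus a finite-dimensional low-frequency estimate where the paper simply invokes continuity in $\lambda$ and perturbation from linear KdV.
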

\begin{proof}
	It would be sufficient to prove \eqref{observabilityuniform1} when $\lambda>1$ is large enough since for bounded $\lambda\geq 0$, the equation \eqref{1Dequation} can be viewed as a pertubation of linear KdV equation and the constant $C$ on the right hand side of \eqref{observabilityuniform1} can be chosen to be continuously depended on $\lambda$. For $\lambda\geq \frac{1}{h_0^2}$, we write $\lambda^2=\frac{1}{h^4}$ and \eqref{1Dequation} becomes
	$$ h^3\partial_tu+(h\partial_x)^3u-(h\partial_x)^{-1}u=0.
	$$
	 Setting $w(t,x)=u(h^{2}t,x)$,  it satisfies the equation
	$$ h\partial_tw+(h\partial_x)^3w-(h\partial_x)^{-1}w=0.
	$$
	Now from \eqref{semiclassicalobservability}, we have
	$$ \|w(0)\|_{L^2(\mathbb{T})}^2\leq C\int_0^{T_0}\int_{\mathbb{T}}|g(x)w(t,x)|^2dxdt+C\|w(0)\|_{H^{-1}(\mathbb{T})}^2.
	$$
	Changing back to $u(t,x)$, it holds
	$$ \|u(0)\|_{L^2(\mathbb{T})}^2\leq \frac{C}{h^2}\int_0^{h^2T_0}\int_{\mathbb{T}}|g(x)u(s,x)|^2dxds+C\|u(0)\|_{H^{-1}(\mathbb{T})}^2.
	$$
	Due to the invariance of the time-translation  and the conservation of $H^s$-norm of the linear equation, we have for any $M\in\mathbb{N}$,
	\begin{equation*}
	\begin{split}
	\|u(Mh^2T_0)\|_{L^2(\mathbb{T})}^2&=\|u(0)\|_{L^2(\mathbb{T})}^2\\ \leq &\, \frac{C}{h^2}\int_{Mh^2T_0}^{(M+1)h^2T_0}\int_{\mathbb{T}}|g(x)u(s,x)|^2dxds+C\|u(Mh^2T_0)\|_{H^{-1}(\mathbb{T})}^2\\
	=&\,\frac{C}{h^2}\int_{Mh^2T_0}^{(M+1)h^2T_0}\int_{\mathbb{T}}|g(x)u(s,x)|^2dxds+C\|u(0)\|_{H^{-1}(\mathbb{T})}^2.
	\end{split}
	\end{equation*} 
	Summing for $M$ from $0$ to $\epsilon_0h^{-2}$, with $\epsilon_0 T_0\leq T$, we have
	$$ \|u(0)\|_{L^2(\mathbb{T})}^2\leq \frac{C}{\epsilon_0}\int_0^T\int_{\mathbb{T}}|g(x)u(t,x)|^2dxdt+\frac{C}{\epsilon_0}\|u(0)\|_{H^{-1}(\mathbb{T})}^2.
	$$
	This completes the proof of Proposition \ref{semiclassicalreduction}.
\end{proof}

\subsubsection{\bf Reduction to frequency localized semi-classical observability}

We use an homogeneous Littlewood-Paley decomposition. Take $\psi\in C_c^{\infty}(\mathbb{R})$ with support supp$(\psi)\subset \{1/2\leq |\xi|\leq 2\}$ and $\psi_k\in C_c^{\infty}(\mathbb{R})$ such that
$$ \sum_{k\in\mathbb{Z}}\psi_k(\xi)=1,\forall\xi\neq 0,
$$
where $\psi_k(\xi)=\psi(2^{k}\xi)$. We will reduce the proof of the inequality \eqref{semiclassicalobservability} to the following:
\begin{proposition}\label{singlefrequencyproposition}
	There exist $\epsilon_0>0,h_0>0,$ small and $T_0>0,$ $C_0=C_0(\epsilon_0)>0$ such that for all $k\in\mathbb{Z}$, with $2^kh\leq\epsilon_0$,
	\begin{equation}\label{singlefrequency}
	\|\psi_k(hD_x)u(0)\|_{L^2(\mathbb{T})}^2\leq C_0\int_0^{T_0}\int_{\mathbb{T}}|g(x)\psi_k(hD_x)u(t,x)|^2dxdt
	\end{equation}
	holds for all solutions $u(t,x)$ of \eqref{semiclassical1}, uniformly in $h\in(0,h_0)$.
\end{proposition}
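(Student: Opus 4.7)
The plan is to argue by contradiction, combining a direct Ingham argument in the low-frequency regime with a two-scale semiclassical defect-measure argument in the high-frequency regime. Assume \eqref{singlefrequency} fails; we obtain sequences $h_n \to 0$, $k_n \in \mathbb Z$ with $2^{k_n} h_n \le \epsilon_0$ and solutions $u_n$ of \eqref{semiclassical1} such that $v_n := \psi_{k_n}(h_n D_x) u_n$ satisfies $\|v_n(0)\|_{L^2(\mathbb T)} = 1$ while $\int_0^{T_0}\|g v_n\|_{L^2}^2\,dt \to 0$. Since $\psi_{k_n}(h_n D_x)$ is a Fourier multiplier commuting with the flow, $v_n$ still solves \eqref{semiclassical1} and $L^2$-conservation gives $\|v_n(t)\|_{L^2} \equiv 1$. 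The effective semiclassical parameter is $\tilde h_n := 2^{k_n} h_n \in (0, \epsilon_0]$, because $\psi_{k_n}(h_n D_x) = \psi(\tilde h_n D_x)$; after extraction $\tilde h_n \to \tilde h_\infty \in [0, \epsilon_0]$. Writing $v_n(t,x) = \sum_{\ell} a_{n,\ell} e^{it\omega_n(\ell)} e^{i\ell x}$ with $\omega_n(\ell) = h_n^2 \ell^3 - (h_n^2\ell)^{-1}$, the key input is the AM--GM bound
\[
\omega_n'(\ell) \;=\; 3h_n^2\ell^2 + \frac{1}{h_n^2\ell^2} \;\ge\; 2\sqrt{3}, \qquad \ell \ne 0,\ h_n > 0,
\]
which plays simultaneously the role of a uniform spectral gap for Proposition \ref{Ingham} and of a uniform lower bound on the Hamiltonian velocity, making the universal choice $T_0 > \pi/\sqrt 3$ admissible.

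\emph{Case A} ($\tilde h_\infty > 0$): Since $h_n \to 0$ one has $k_n \to +\infty$, and the Fourier support of $v_n$ lies in a bounded integer set $\{|\ell| \sim 1/\tilde h_\infty\}$ independent of $n$. For distinct $\ell, \ell'$ in this set, the gap $|\omega_n(\ell) - \omega_n(\ell')|$ is of order $h_n^{-2}$ and thus tends to $\infty$; in particular all phases are eventually distinct and Proposition \ref{Ingham} applies at time $T_0$ with Ingham constant bounded away from $0$. Expanding $g v_n$ in its $x$-Fourier series, applying Ingham at each Fourier level $m$ and summing via Parseval, we get
\[
\int_0^{T_0}\|g v_n\|_{L^2}^2\,dt \;\ge\; c\,\|g\|_{L^2}^2\sum_{\ell}|a_{n,\ell}|^2 \;=\; c\,\|g\|_{L^2}^2\,\|v_n(0)\|_{L^2}^2 \;=\; c\,\|g\|_{L^2}^2 > 0,
\]
which contradicts the assumption $\int_0^{T_0}\|g v_n\|_{L^2}^2\,dt \to 0$.

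\emph{Case B} ($\tilde h_\infty = 0$): Associate to $(v_n)$ a semiclassical defect measure $\mu$ at the scale $\tilde h_n$, a nonnegative measure on $[0, T_0]\times T^*\mathbb T$. Frequency localization forces $\mathrm{supp}\,\mu \subset \{1/2 \le |\tilde\xi| \le 2\}$; the cutoff $\psi(\tilde h_n D_x)$ prevents mass loss at infinite semiclassical frequency; and $L^2$-conservation yields $\mu([0,T_0]\times T^*\mathbb T) = T_0$. Rewriting the equation at the adjusted scale,
\[
\tilde h_n\,\partial_t v_n + 2^{-2k_n}(\tilde h_n \partial_x)^3 v_n - 2^{2k_n}(\tilde h_n \partial_x)^{-1} v_n = 0,
\]
the standard commutator computation identifies $\mu$ as invariant under the Hamilton field of $a_n(\tilde\xi) = 2^{-2k_n}\tilde\xi^3 - 2^{2k_n}/\tilde\xi$, i.e.\ transport in $x$ at speed $a_n'(\tilde\xi) = 3\cdot 2^{-2k_n}\tilde\xi^2 + 2^{2k_n}/\tilde\xi^2 \ge 2\sqrt 3$. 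The hypothesis $\int_0^{T_0}\|g v_n\|^2\,dt \to 0$ forces $g^2\,d\mu = 0$, hence $\mathrm{supp}\,\mu \subset \{g = 0\}\times T^*\mathbb T$; since $T_0 > \pi/\sqrt 3$, the transport covers $\mathbb T$ inside $[0,T_0]$ on every fibre $\{\tilde\xi = \mathrm{const}\}$, so flow-invariance propagates the support to all of $\mathbb T$ in $x$. This forces $\mu \equiv 0$, contradicting mass $T_0 > 0$. When $|k_n| \to \infty$ the Hamilton speed blows up and one additionally rescales time by $\Lambda_n = 3\cdot 2^{-2k_n} + 2^{2k_n}$, so that the limiting flow keeps a bounded nondegenerate velocity; the observation time $T_0\Lambda_n \to \infty$ only strengthens the propagation argument.

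The main obstacle is precisely uniformity in the unbounded auxiliary index $k_n$: the eigenfrequency spacings, the natural semiclassical parameter, and the Hamilton-flow speed all scale differently with $k_n$, and no single semiclassical limit handles every pair $(h_n, k_n)$ simultaneously. The case split above and the internal time-rescaling in Case B are dictated by this, while the AM--GM lower bound $\omega_n' \ge 2\sqrt 3$ is the single device making one choice of $T_0 > \pi/\sqrt 3$ work throughout.
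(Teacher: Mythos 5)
The proposal takes a genuinely different route from the paper. The paper proves Proposition \ref{singlefrequencyproposition} directly via the $\epsilon$-parameter family of symbols $p_\epsilon,q_\epsilon$, Egorov's theorem, sharp G\aa rding (Proposition \ref{highfrequency}), together with a time-rescaling followed by an explicit $L^2$-conservation iteration in each of three $k$-regimes. You instead argue by contradiction and split into two cases according to whether the effective semiclassical scale $\tilde h_n = 2^{k_n}h_n$ has positive or zero limit, using the Ingham inequality in the first case and a semiclassical defect measure in the second. Your AM--GM observation $\omega_n'(\ell) = 3h_n^2\ell^2 + 1/(h_n^2\ell^2) \ge 2\sqrt 3$ is a clean, uniform replacement for the paper's lower bound $|\epsilon^4/\xi_0^2 + 3\xi_0^2| \ge c_0 > 0$ on the support of $\widetilde\psi$, and it does unify the spectral-gap and velocity roles nicely. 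Case A is essentially sound (modulo some care about the case where $h_n$ does not tend to $0$, which you do not justify, and about possible collisions $\omega_n(\ell)=\omega_n(\ell')$ between opposite-sign frequencies, both of which are benign here but should be addressed).

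However there is a genuine gap in Case B when $|k_n| \to \infty$. Your defect measure $\mu$ lives on the fixed time slab $[0,T_0]\times T^*\mathbb T$, but the invariance equation you write, $H_{a_n}\mu = 0$ with $a_n(\tilde\xi) = 2^{-2k_n}\tilde\xi^3 - 2^{2k_n}/\tilde\xi$, has a vector field whose size diverges along the sequence; there is no limiting transport equation without normalization. You then propose to rescale time by $\Lambda_n = 3\cdot 2^{-2k_n} + 2^{2k_n}$ and assert that the lengthened observation window $T_0\Lambda_n \to \infty$ \emph{only strengthens} the propagation argument. This is precisely where the argument breaks: after the rescaling $w_n(s,\cdot) = v_n(s/\Lambda_n,\cdot)$, the hypothesis $\int_0^{T_0}\|g v_n\|^2\,dt \to 0$ becomes $\int_0^{T_0\Lambda_n}\|g w_n\|^2\,ds = o(\Lambda_n)$, which does \emph{not} imply that $\int_0^{T_*}\|g w_n\|^2\,ds \to 0$ on a fixed bounded slab $[0,T_*]$, so you cannot conclude $g^2\,d\mu = 0$ for a defect measure on such a slab. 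What is missing is a pigeonhole step: among the $\sim \Lambda_n$ consecutive subintervals of length $T_*$ in $[0,T_0\Lambda_n]$, at least one carries $o(1)$ of the mass of $\|g w_n\|^2$; then translating $w_n$ in time (allowed by the autonomy of the flow and $L^2$-conservation) one obtains a sequence to which the defect-measure argument on $[0,T_*]$ legitimately applies. This is exactly the role played in the paper by the explicit iteration ``apply the inequality $2^{-2k}-1$ times using $L^2$-conservation.'' Without that step your Case B does not close.
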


This proposition will be proved in the next subsection. In fact, from the proof, we can deduce that if Proposition \ref{singlefrequencyproposition} holds true for some $\epsilon_0>0,h_0>0$, it is also true for any other parameters $\epsilon_1,h_1$ such that $\epsilon_1<\epsilon_0$ and $h_1<h_0$ with possible change in the dependency of constant $C_0$.

\begin{lemma}
	Proposition \ref{singlefrequencyproposition} implies the inequality  \eqref{semiclassicalobservability}.
\end{lemma}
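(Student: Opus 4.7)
The plan is to decompose $u$ dyadically in frequency by the Littlewood--Paley partition $\sum_k \psi_k(hD_x) = I$ (valid modulo the zero Fourier mode, which is killed by $u_0\in L_0^2$) and split the sum at the threshold $2^k h = \epsilon_0$. Set $S_{\mathrm{hi}} := \sum_{k:\,2^k h\leq\epsilon_0}\psi_k(hD_x)$ and $S_{\mathrm{lo}} := I-S_{\mathrm{hi}}$. Since $\psi_k(hD_x)$ localizes to Fourier modes $|l|\sim 2^{-k}/h$, the operator $S_{\mathrm{lo}}$ is supported on modes $|l|\lesssim 1/\epsilon_0$, a set of size independent of $h$. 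On this finite-dimensional space $\|\cdot\|_{L^2}\leq C(\epsilon_0)\|\cdot\|_{H^{-1}}$, so the $S_{\mathrm{lo}}$-contribution is absorbed directly into the $\|u_0\|_{H^{-1}}^2$ term on the right-hand side of \eqref{semiclassicalobservability}.

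For the high-frequency part, since $\psi_k(hD_x)$ commutes with the linear semi-classical flow, $\psi_k(hD_x)u$ is itself a solution of \eqref{semiclassical1}. Applying Proposition \ref{singlefrequencyproposition} to each dyadic piece and using almost-orthogonality of the Littlewood--Paley projectors yields
\[
\|S_{\mathrm{hi}}u_0\|_{L^2}^2 \leq C\!\!\sum_{2^k h\leq\epsilon_0}\!\|\psi_k(hD_x)u_0\|_{L^2}^2 \leq CC_0\!\!\sum_{2^k h\leq\epsilon_0}\int_0^{T_0}\|g\,\psi_k(hD_x)u(t)\|_{L^2}^2\,dt.
\]
I would then write $g\,\psi_k(hD_x)u = \psi_k(hD_x)(gu)+[g,\psi_k(hD_x)]u$, so that the sum splits into two pieces: the first, $\sum_k\|\psi_k(hD_x)(gu)\|_{L^2}^2 \lesssim \|gu\|_{L^2}^2$ by almost-orthogonality, is exactly the observation term wanted, while the second is an error to be controlled by $\|u_0\|_{H^{-1}}^2$.

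For the commutator, semi-classical symbolic calculus (in the same spirit as Lemma \ref{commmutator}) supplies
\[
\|[g,\psi_k(hD_x)]u\|_{L^2} \leq C(2^k h)\,\|\tilde\psi_k(hD_x)u\|_{L^2}
\]
for a mildly enlarged cutoff $\tilde\psi_k$. A direct Plancherel computation---using that $\tilde\psi_k(hl)$ is nonzero only for $|l|\sim 2^{-k}/h$, so $(2^k h)^2\sim 1/l^2$ on its support---then gives $\sum_k(2^k h)^2\|\tilde\psi_k(hD_x)u\|_{L^2}^2 \leq C\|u\|_{H^{-1}(\mathbb{T})}^2$. Because the linear flow preserves $|\widehat u(t,l)|$ mode by mode, $\|u(t)\|_{H^{-1}}^2$ is conserved, and time integration merely contributes a factor of $T_0$. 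Collecting everything yields exactly \eqref{semiclassicalobservability}.

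The main technical hurdle is the commutator estimate \emph{summed} in $k$: the individual bound saturates precisely at $2^k h\sim\epsilon_0$ (the boundary between the semi-classical and non-semi-classical regimes), and the number of relevant $k$ grows like $\log(\epsilon_0/h)$, so a crude sum of the naive bound will diverge. The key is to resist summing the naive bound and instead to pair the factor $(2^k h)^2$ with the frequency localization of $\tilde\psi_k$, converting the commutator loss into an $H^{-1}$-norm via Plancherel; that norm is exactly what is permitted on the right-hand side of \eqref{semiclassicalobservability}, so the whole argument closes.
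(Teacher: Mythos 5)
Your proof is structurally the same as the paper's---dyadic decomposition in $hD_x$, low-frequency blocks ($2^kh>\epsilon_0$) into the $H^{-1}$ term, Proposition~\ref{singlefrequencyproposition} on each remaining block, and the split $g\psi_k(hD_x)u=\psi_k(hD_x)(gu)+[g,\psi_k(hD_x)]u$---and it is correct. Where you deviate, and where your closing diagnosis of the ``main technical hurdle'' is mistaken, is in summing the commutator errors. You claim that squaring the crude bound $\|[g,\psi_k(hD_x)]u\|_{L^2}\le C(2^kh)\|u\|_{L^2}$ and summing over $2^kh\le\epsilon_0$ diverges because there are $\sim\log(\epsilon_0/h)$ terms; but
\[
\sum_{k:\,2^kh\le\epsilon_0}(2^kh)^2
\]
is a \emph{geometric} series, bounded by $C\epsilon_0^2$ uniformly in $h$: the number of terms is logarithmic but they decay geometrically. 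The paper exploits exactly this, collects an error $CT_0\epsilon_0^2\|u_0\|_{L^2(\mathbb{T})}^2$, and absorbs it into the left-hand side by taking $\epsilon_0$ small (which is permitted by the remark following Proposition~\ref{singlefrequencyproposition}); no refined commutator estimate is needed. Your alternative---sharpening the commutator bound to a frequency-localized $C(2^kh)\|\tilde\psi_k(hD_x)u\|_{L^2}$ and then trading $(2^kh)^2\sim l^{-2}$ on the dyadic block for the $H^{-1}$ norm by Plancherel---is also valid, and it has the aesthetic advantage of needing no smallness-of-$\epsilon_0$ absorption, but it demands more from the commutator than the paper's Lemma~\ref{commmutator} actually proves (and since $g$ is only $C_c^2(\mathbb{T})$, the frequency localization of $[g,\psi_k(hD_x)]$ requires a short additional justification). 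In short: your argument goes through, but the obstruction you claim for the naive geometric sum does not exist, and the paper's simpler sum-and-absorb already closes.
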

Indeed, applying Lemma \ref{commmutator}, we have
\begin{equation*}
\begin{split}
\|g\psi_k(hD_x)u\|_{L^2(\mathbb{T})}^2\leq & 2\|\psi_k(hD_x)(gu)\|_{L^2(\mathbb{T})}^2+ 2\|[\psi(2^khD_x),g]u\|_{L^2(\mathbb{T})}^2\\
\leq &2\|\psi_k(hD_x)(gu)\|_{L^2(\mathbb{T})}^2+C(2^kh)^2\|u(t)\|_{L^2(\mathbb{T})}^2,
\end{split}
\end{equation*}
thus
\begin{equation*}
\begin{split}
\sum_{k\leq \log_2(\epsilon_0/h)}\|\psi_k(hD_x)u(0)\|_{L^2(\mathbb{T})}^2\leq & \:  C\sum_{k\leq \log_2(\epsilon_0/h)}\int_0^{T_0}\|\psi_k(hD_x)(gu(t))\|_{L^2(\mathbb{T})}^2\\+ &\, CT_0\sum_{k\leq \log_2(\epsilon_0/h)}(2^kh)^2\|u(0)\|_{L^2(\mathbb{T})}^2\\
\leq & \,C\int_0^{T_0}\|gu(t)\|_{L^2(\mathbb{T})}^2dt+CT_0\epsilon_0^2\|u(0)\|_{L^2(\mathbb{T})}^2.
\end{split}
\end{equation*}
Therefore,
$$ \|u(0)\|_{L^2(\mathbb{T})}^2\leq C\int_0^{T_0}\int_{\mathbb{T}}|g(x)u(t,x)|^2dxdt+CT_0\epsilon_0^2\|u(0)\|_{L^2(\mathbb{T})}^2+C\|u(0)\|_{H^{-1}(\mathbb{T})}^2. 
$$
To complete the proof, we choose $\epsilon_0^2<\frac{CT_0}{2}$ and  \eqref{semiclassicalobservability} follows.\\

In summary, we have showed that  in order to prove the uniform observability inequality \eqref{observabilityuniform2} for all solutions of \eqref{1Dequation}, it suffices to prove the observability \eqref{singlefrequency} for all solutions of \eqref{semiclassical1}, uniformly in $0<h\ll 1$ and $k\in\mathbb{Z}$ such that $2^kh<\epsilon_0$.

\subsubsection{\bf Propagation estimate with parameter dependence symbol}
This section is devoted to the proof of Proposition \ref{singlefrequencyproposition}.
We recall some basic notations and results about $\tilde{h}-$pseudo-differential calculus. For $m\in\mathbb{R}$, let $S^m$ be the set of $\tilde{h}$-dependent functions $a(x,\xi,\tilde{h})$ with parameter $\tilde{h}\in(0,1)$ such that for any indices $\alpha,\beta$,
$$ \sup_{(x,\xi,\tilde{h})\in\mathbb{R}^{2d}\times(0,1)}|\partial_x^{\alpha}\partial_{\xi}^{\beta}a(x,\xi,\tilde{h})|\leq C_{\alpha,\beta}(1+|\xi|)^{m-|\beta|}.
$$  
For $a\in S^m$, we denote by $\mathrm{Op}_{\tilde{h}}(a)$ the $\tilde{h}-$pseudo-differential operator acting on Schwartz functions via
$$ \mathrm{Op}_{\tilde{h}}(a)f(x):=\frac{1}{(2\pi \tilde{h})^d}\int_{\mathbb{R}^{2d}}e^{\frac{i(x-y)\cdot\xi}{\tilde{h}}}a(x,\xi,\tilde{h})f(y)dyd\xi.
$$
 We refer \cite{booksemiclassical} for symbolic calculus and another basic properties about $\tilde{h}-$pseudo-differential operator. For functions on a compact Riemannian manifold, we can also define $\tilde{h}-$pseudo-differential operator by using local coordinate and partition of unity.   

Now let us consider the following $\epsilon-$dependence symbols:
$$p_{\epsilon}(x,\xi)=\left(\frac{\epsilon^4}{\xi}-\xi^3\right)\chi(\xi),\quad q_{\epsilon}(x,\xi)=\left(\frac{1}{\xi}-\epsilon^4\xi^3\right)\chi(\xi),
$$
where $\chi\in C_c^{\infty}(\mathbb{R})$ with supp$(\xi)\subset\{\alpha<|\xi|<\beta\}$ for some $0<\alpha<\frac{1}{2}$, $\beta>2$ and $\chi\equiv 1$ in a neighborhood of $\{1/2\leq|\xi|\leq 2\}$. Denote $P_{\epsilon}=\mathrm{Op}_{\widetilde{h}}(p_{\epsilon})$ and $Q_{\epsilon}=\mathrm{Op}_{\widetilde{h}}(q_{\epsilon})$. Denote $U_{\epsilon}(t)$ and $V_{\epsilon}(t)$  solutions of the operator equations
\begin{align} \label{solutionoperator1}
\begin{cases}    \frac{\widetilde{h}}{i}\partial_tU_{\epsilon}(t)+U_{\epsilon}(t)P_{\epsilon}=0,
\\
U_{\epsilon}(0)=I,
\end{cases}
\end{align}
\begin{align} \label{solutionoperator2}
\begin{cases}    \frac{\widetilde{h}}{i}\partial_tV_{\epsilon}(t)+V_{\epsilon}(t)Q_{\epsilon}=0,
\\
V_{\epsilon}(0)=I.
\end{cases}
\end{align}
The flows associated to the vector fields $H_{p_{\epsilon}},H_{q_{\epsilon}}$ are explicitly given by
$$ \phi_{\epsilon,t}(x_0,\xi_0)=\left(x_0-\left(
\frac{\epsilon^4}{\xi_0^2}+3\xi_0^2\right)\chi(\xi_0)t+\left(\frac{\epsilon^4}{\xi_0}-\xi_0^3\right)\chi'(\xi_0)t,\xi_0\right),
$$
$$\varphi_{\epsilon,t}(x_0,\xi_0)=\left(x_0-{\left(\frac{1}{\xi_0^2}+3\epsilon^4\xi_0^2\right)}\chi(\xi_0)t+\left(\frac{1}{\xi_0}-\epsilon^4\xi_0^3\right)\chi'(\xi_0)t,\xi_0\right)
$$
with respectively.

From Egorov's Theorem (see \cite{booksemiclassical}), for any symbol $a(x,\xi)\in C_c^{\infty}(T^*M)$, 
$$ U_{\epsilon}(-t)\mathrm{Op}_{\widetilde{h}}(a)U_{\epsilon}(t)=\mathrm{Op}_{\widetilde{h}}(a\circ\phi_{\epsilon,t})+O_{L^2\rightarrow L^2}(\widetilde{h}),
$$
$$V_{\epsilon}(-t)\mathrm{Op}_{\widetilde{h}}(a)V_{\epsilon}(t)=\mathrm{Op}_{\widetilde{h}}(a\circ\varphi_{\epsilon,t})+O_{L^2\rightarrow L^2}(\widetilde{h}).
$$
We remark that the bound $O_{L^2\rightarrow L^2}(\widetilde{h})$ is independent for $\epsilon\leq 1$ since all the semi-norms of the symbol $p_{\epsilon},q_{\epsilon}$ can be chosen continuously depending on $\epsilon$. \\

Now we prove the following localized observability estimates:
\begin{proposition}\label{highfrequency}
	There exist $C_0>0,T_0>0,\widetilde{h}_0>0$ such that for all $u_0\in L_0^2(\mathbb{T})$, all $\widetilde{h}\leq  \widetilde{h}_0$
	\begin{equation}\label{veryhigh}
	\|\psi(\widetilde{h}D_x)u_0\|_{L^2(\mathbb{T})}^2\leq C_0\int_0^{T_0}\|gU_{\epsilon}(t)\psi(\widetilde{h}D_x)u_0\|_{L^2(\mathbb{T})}^2dt,
	\end{equation}
	\begin{equation}\label{lesshigh}
	\|\psi(\widetilde{h}D_x)u_0\|_{L^2(\mathbb{T})}^2\leq C_0\int_0^{T_0}\|gV_{\epsilon}(t)\psi(\widetilde{h}D_x)u_0\|_{L^2(\mathbb{T})}^2dt.
	\end{equation}
\end{proposition}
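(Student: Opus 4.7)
The approach is a standard contradiction/compactness argument via semiclassical defect measures, exploiting that on the frequency shell $\{1/2\leq|\xi|\leq 2\}$ (where $\chi\equiv 1$ and $\chi'\equiv 0$) the group velocities of $p_\epsilon$ and $q_\epsilon$ are bounded and bounded away from $0$, uniformly in $\epsilon\in[0,1]$. The two estimates \eqref{veryhigh} and \eqref{lesshigh} have parallel proofs; I describe the argument for \eqref{veryhigh} and indicate the obvious change for $V_\epsilon$.

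First I would set up the contradiction. Suppose \eqref{veryhigh} fails for every $(C_0,T_0,\widetilde h_0)$. Fix a large $T_0$ to be chosen. Then there exist sequences $\widetilde h_n\to 0$, $\epsilon_n\in(0,1]$ (after extraction $\epsilon_n\to\epsilon_\infty\in[0,1]$), and $u_{0,n}\in L_0^2(\mathbb T)$ such that, writing $f_n:=\psi(\widetilde h_n D_x)u_{0,n}$ and $v_n(t):=U_{\epsilon_n}(t)f_n$, one has $\|f_n\|_{L^2}=1$ and $\int_0^{T_0}\|gv_n(t)\|_{L^2}^2\,dt\to 0$. Since $U_{\epsilon_n}(t)$ is a Fourier multiplier it commutes with $\psi(\widetilde h_n D_x)$, so each $v_n(t)$ remains normalized in $L^2$ and frequency-localized in $\{|\widetilde h_n\xi|\in[1/2,2]\}$. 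Passing to a further subsequence, extract a semiclassical defect measure $\mu$ of $(v_n)$ on $[0,T_0]\times T^*\mathbb T$, supported in $[0,T_0]\times\mathbb T\times\{1/2\leq|\xi|\leq 2\}$ and of total mass $T_0$.

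Next I would read off two properties. Approximating $g^2\in C^2(\mathbb T)$ by smooth $\xi$-independent symbols, the observation hypothesis yields $\int g(x)^2\,d\mu(t,x,\xi)=0$, hence $\mathrm{supp}\,\mu\subset[0,T_0]\times\{g=0\}\times\mathbb R_\xi$. Applying the Egorov estimate from the excerpt — whose remainder $O_{L^2\to L^2}(\widetilde h_n)$ is uniform in $\epsilon_n\leq 1$ — against test symbols $a\in C_c^\infty(T^*\mathbb T)$, together with the $L^2$-isometry of $U_{\epsilon_n}(t)$, gives the flow-invariance $\mu_t=(\phi_{\epsilon_\infty,t})_*\mu_0$ (using that $\phi_{\epsilon_n,t}\to\phi_{\epsilon_\infty,t}$ uniformly on the frequency shell). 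In particular $\mu_0$ is a probability measure on $\mathbb T\times\{1/2\leq|\xi|\leq 2\}$.

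Finally I would invoke the geometric control. On $\{1/2\leq|\xi_0|\leq 2\}$ the flow reduces to
\[
\phi_{\epsilon_\infty,t}(x_0,\xi_0)=\Bigl(x_0-\bigl(\tfrac{\epsilon_\infty^4}{\xi_0^2}+3\xi_0^2\bigr)t,\ \xi_0\Bigr),
\]
and the $x$-speed satisfies $3/4\leq\tfrac{\epsilon_\infty^4}{\xi_0^2}+3\xi_0^2\leq 16$ for all $\epsilon_\infty\in[0,1]$ and $|\xi_0|\in[1/2,2]$. Choosing $T_0>8\pi/3$ forces every such trajectory to wind once around $\mathbb T$ during $[0,T_0]$, and hence to enter the nonempty open set $\{g>0\}$. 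Combined with flow invariance and $\mathrm{supp}\,\mu_t\subset\{g=0\}$ for a.e.\ $t$, this gives $\mu_0\equiv 0$, contradicting $\mu_0(T^*\mathbb T)=1$.

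The main obstacle is obtaining all estimates uniformly in $\epsilon_n\in(0,1]$, including the degenerate limit $\epsilon_n\to 0$; fortunately this limit is favorable since the principal symbol becomes simply $-\xi^3$, whose velocity $-3\xi^2$ still lies in $[-12,-3/4]$ on the shell. For \eqref{lesshigh} the velocity of $\varphi_{\epsilon,t}$ on the same shell is $-(1/\xi_0^2+3\epsilon^4\xi_0^2)\in[-16,-1/4]$ uniformly in $\epsilon\in[0,1]$, so the identical argument goes through provided $T_0>8\pi$. A secondary technical point is the $C_c^2$-to-smooth approximation of $g$ in reading off the support of $\mu$ from the vanishing observation, which is standard but should be done with care so that constants do not depend on $\epsilon_n$ or $\widetilde h_n$.
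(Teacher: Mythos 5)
Your argument is correct, but it takes a genuinely different (softer) route than the paper. You run a compactness/contradiction argument: assume failure, extract a sequence $\widetilde h_n\to 0$, $\epsilon_n\to\epsilon_\infty$, $f_n$, pass to a semiclassical defect measure $\mu$, show $\mathrm{supp}\,\mu\subset\{g=0\}$ from the vanishing observation term, show flow invariance via Egorov, and then kill $\mu$ by the geometric control condition (every bicharacteristic on the frequency shell has speed bounded below uniformly in $\epsilon$, hence traverses $\{g>0\}$ in time $T_0$). The paper instead argues \emph{directly}: it conjugates $g^2\widetilde\psi(\widetilde h D_x)$ by $U_\epsilon(\pm t)$ via Egorov, integrates in $t$ to obtain the time-averaged symbol $b_{T_0}(x,\xi)=\int_0^{T_0}(g^2\widetilde\psi)\circ\phi_{\epsilon,t}\,dt$, observes that the same uniform-speed estimate gives a pointwise lower bound $b_{T_0}\geq c_1>0$ on the shell, and then concludes by the sharp G\aa{}rding inequality, absorbing the $O(\widetilde h)$ error for $\widetilde h$ small. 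Both proofs rest on the identical geometric input (uniform-in-$\epsilon$ nonvanishing of $\epsilon^4/\xi^2+3\xi^2$ and $1/\xi^2+3\epsilon^4\xi^2$ on $1/2\leq|\xi|\leq 2$) and the same Egorov step with remainder uniform in $\epsilon$; the difference is that the paper's direct G\aa{}rding argument avoids extraction, is constructive in $C_0,T_0,\widetilde h_0$, and sidesteps the need to verify the disintegration $\mu=\mu_t\otimes dt$ and the push-forward relation $\mu_t=(\phi_{\epsilon_\infty,t})_*\mu_0$, whereas your defect-measure version is the familiar ``soft'' packaging that makes the geometric control condition visible at the cost of a contradiction argument. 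Your numerics ($T_0>8\pi/3$ and $T_0>8\pi$) are consistent with the speed bounds, and your attention to the degenerate limit $\epsilon_n\to 0$ and to approximating $g\in C_c^2$ by smooth symbols matches the care the paper also signals.
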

\begin{proof}
	Here we only prove the first inequality, since the second one  follows in the same manner.
		Consider the symbol $a(x,\xi)=g(x)^2\widetilde{\psi}(\xi)$\footnote{Strictly speaking, $g$ is not smooth and we need approximate it by smoothing functions.} and its quantization $\mathrm{Op}_{\widetilde{h}}(a)=(g(x))^2\widetilde{\psi}(\widetilde{h}D_x)$, where $\widetilde{\psi}$ is a slight enlargement of $\psi$ such that $\widetilde{\psi}\psi=\psi$ and supp $\widetilde{\psi}\subset\{\alpha<|\xi|<\beta\}$. From Egorov's Theorem, we have
	$$ U_{\epsilon}(-t)\mathrm{Op}_{\widetilde{h}}(a)U_{\epsilon}(t)=\mathrm{Op}_{\widetilde{h}}(a\circ\phi_{\epsilon,t})+O_{L^2\rightarrow L^2}(\widetilde{h}),\textrm{ uniformly in } \epsilon\leq 1.
	$$
	Note that on the support of $a$, $\chi'(\xi)=0$, so we have $$\phi_{\epsilon,t}(x_0,\xi_0)=\left(x_0-\left(\frac{\epsilon^4}{\xi_0^2}+3\xi_0^2\right)t,\xi_0\right).
	$$
	
	Notice that $\left|\frac{\epsilon^4}{\xi_0^2}+3\xi_0^2\right|\geq c_0>0$, uniformly in $\epsilon,$ on the $\xi-$support of $\widetilde{\psi}$. Therefore, for some $T_0=T_0(c_0)>0$, and $c_1>0$ , we have
	$$ \int_0^{T_0}a\circ\phi_{\epsilon,t}dt\geq c_1>0.
	$$
	Now we calculate
	\begin{equation*}
	\begin{split}
	&\int_0^{T_0}\|gU_{\epsilon}(t)\psi(\widetilde{h}D_x)u_0\|_{L^2(\mathbb{T})}^2dt\\=&\int_0^{T_0}\left(gU_{\epsilon}(t)\psi(\widetilde{h}D_x)u_0,gU_{\epsilon}(t)\widetilde{\psi}(\widetilde{h}D_x)\psi(\widetilde{h}D_x)u_0\right)_{L^2(\mathbb{T})}dt\\
	=&\int_0^{T_0}\left(U_{\epsilon}(-t)\widetilde{\psi}(\widetilde{h}D_x)g^2U_{\epsilon}(t)u_0,\psi(\widetilde{h}D_x)u_0\right)_{L^2(\mathbb{T})}dt\\
	=&\left(\mathrm{Op}_{\widetilde{h}}(b_{T_0})\psi(\widetilde{h}D_x)u_0,\psi(\widetilde{h}D_x)u_0\right)_{L^2(\mathbb{T})},
	\end{split}
	\end{equation*}
	with $b_{T_0}(x,\xi)=\int_0^{T_0}a\circ\phi_{\epsilon,t}dt$ modulo $\widetilde{h}S^0$. Thus, from Sharp G\aa rding inequality (see  \cite{booksemiclassical}), we have
	$$\left(\mathrm{Op}_{\widetilde{h}}(b_{T_0})\psi(\widetilde{h}D_x)u_0,\psi(\widetilde{h}D_x)u_0\right)_{L^2(\mathbb{T})}\geq \frac{c_1}{2}\|\psi(\widetilde{h}D_x)u_0\|_{L^2(\mathbb{T})}^2-C\widetilde{h}\|\psi(\widetilde{h}D_x)u_0\|_{L^2(\mathbb{T})}^2.
	$$
	To conclude the proof, we choose $\widetilde{h}_0<\min\{\frac{c_1}{4C},1\}$.
\end{proof}

\begin{proof}[Proof of Proposition \ref{singlefrequencyproposition}]
For fixed $h\ll 1$, we analyse the three regimes for $k\in\mathbb{Z}$:\\

\emph{\textbf{Case 1}: $|k|\leq N_0$ for some large natural number $N_0$}\\
This  corresponds to the case $|\xi|\sim 1$. Let $u_k=\psi_k(hD_x)u$, the equation satisfied by $u_k$ is  \eqref{semiclassical1}. We can either use \eqref{veryhigh} or \eqref{lesshigh} with parameter $\epsilon=1$ to obtain that (note that $\widetilde{h}=2^k\widetilde{h}\sim h$ in this regime)
$$\|\psi_k(hD_x)u_0\|_{L^2(\mathbb{T})}^2\leq C_0\int_0^{T_0}\|g\psi_k(hD_x)u(t)\|_{L^2(\mathbb{T})}^2dt.
$$

\emph{\textbf{Case 2}: $k\leq -N_0$ for some large constant $N_0$}\\
This case corresponds to $|\xi|\sim 2^{-k}\gg 1$. Defining a new semi-classical parameter $\widetilde{h}_k=2^kh\ll 1$ and to rescale the time variable we set $w_k(t,x):=\psi(\widetilde{h}_kD_x)u(2^{2k}t,x)$ and $u_k=\psi(\widetilde{h}_kD_x)u$. The equation satisfied by $w_k$ is:
$$ \widetilde{h_k}\partial_tw_k+(\widetilde{h_k}\partial_x)^3w_k+2^{4k}(\widetilde{h}\partial_x)^{-1}w_k=0.
$$
Applying  \eqref{veryhigh} to $w_k$ with $\epsilon=2^k\ll 1$ and $\widetilde{h}=\widetilde{h}_k$ we obtain
$$ \|w_k(0)\|_{L^2(\mathbb{T})}^2\leq C\int_0^{T_0}\|gw_k(t)\|_{L^2(\mathbb{T})}^2dt.
$$
From conservation of $L^2$ norm, we apply the inequality above $2^{-2k}-1$ times and obtain that
\begin{equation*}
\begin{split}
\frac{1}{2^{2k}}\|u_k(0)\|_{L^2(\mathbb{T})}^2\leq &\, \frac{C}{2^{2k}}\sum_{M=0}^{2^{-2k}-1}\int_{M2^{2k}T_0}^{(M+1)2^{2k}T_0}\|gu_k(t)\|_{L^2(\mathbb{T})}^2dt\\
=\, &\frac{C}{2^{2k}}\int_0^{T_0}\|gu_k(t)\|_{L^2(\mathbb{T})}^2dt.
\end{split}
\end{equation*}
This is exactly
$$ \|\psi_k(hD_x)u(0)\|_{L^2(\mathbb{T})}^2\leq C\int_0^{T_0}\|g\psi_k(hD_x)u(t)\|_{L^2(\mathbb{T})}^2dt.
$$

\emph{\textbf{Case 3:} $k\geq N_0$}\\
This case corresponds to $|\xi|\sim 2^{-k}\ll 1$. Define the new small semi-classical parameter $\tilde{h}_k=2^{k}h$. The $\widetilde{h}-$ pseudo differential calculus applies,  by the restriction $2^{k}h\leq \epsilon_0\ll 1$.

Denote by $u_k=\psi(\widetilde{h}_kD_x)u$ and define $v_k(t,x)=u_k(2^{-2k}t,x)$. $v_k$ solves the equation
$$ \widetilde{h}_k\partial_tv_k+2^{-4k}(\widetilde{h}_k\partial_x)^3v_k+(\widetilde{h}_k\partial)^{-1}v_k=0.
$$
Applying \eqref{lesshigh} with $\widetilde{h}=\widetilde{h}_k,\epsilon=2^{-k}$, we obtain that
$$ \|v_k(0)\|_{L^2(\mathbb{T})}^2\leq C\int_0^{T_0}\|gv_k(t)\|_{L^2(\mathbb{T})}^2dt.
$$

Again by conservation of $L^2$ norm as in the argument of Case 2, we finally have
$$ \|u_k(0)\|_{L^2(\mathbb{T})}^2\leq C\int_0^{T_0}\|gu_k(t)\|_{L^2(\mathbb{T})}^2dt.
$$
This completes the proof of Proposition \ref{singlefrequencyproposition}. Hence the proof of Proposition \ref{1Dobservability}, and the observability inequality \eqref{observability2D} for the lineariaed KP-II equation are also complete.
\end{proof}

As a consequence of Proposition \ref{HUM} , the internal controllability for the linear KP II is obtained. We conclude this section by summarizing it in  the following proposition:

\begin{proposition}\label{controloperator} Given $T>0$,  there exists a bounded linear operator 
	$$ \Upsilon:  (L_0^{2}(\T^2))^{2} \to  L^{2}(0,T;L^{2}(\T^2)) $$
	such that for any $u_{0}, u_{1} \in L_0^{2}(\T^2)$, the control defined by $h:= \Upsilon(u_{0},u_{1})$ drives the solution of 
	\begin{align} \label{controlKP}
	\begin{cases}    \partial_tu+\partial_x^3u+\partial_x^{-1}\partial^2_{y}u=\mathcal{G}h,\quad
	(t,x)\in\R\times\mathbb{T}^{2},
	\\
	u|_{t=0}=u_0,
	\end{cases}
	\end{align}
	
	to $u(T)=u_{1}$.\\
	Moreover, we have 
	$$\|\Upsilon(u_{0},u_{1}) \|_{ L^{2}(0,T;L^{2}(\T^2)) } \le C \|(u_{0},u_{1})\|_{(L^{2}(\T^2))^{2}}.$$
\end{proposition}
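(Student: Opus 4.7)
The plan is to apply the Hilbert Uniqueness Method, for which the substantive analytic input, namely the observability inequality \eqref{observability2D}, has already been proved in the preceding subsections. Everything that follows is a routine packaging and produces the claimed bounded linear operator $\Upsilon$ automatically.

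Let $S(t)=e^{-t(\partial_x^3+\partial_x^{-1}\partial_y^2)}$ denote the unitary $C_0$-group on $L_0^2(\mathbb{T}^2)$ generated by the skew-adjoint operator associated with \eqref{linearcontrol}. By Duhamel's formula,
\begin{equation*}
u(T)=S(T)u_0+\Lambda h,\qquad \Lambda h:=\int_0^T S(T-s)\,\mathcal{G}_{\pe}h(s)\,ds,
\end{equation*}
so constructing $\Upsilon$ reduces to producing a bounded linear right-inverse of $\Lambda:L^2(0,T;L^2(\mathbb{T}^2))\to L_0^2(\mathbb{T}^2)$. A direct computation (given in the definition \eqref{verticalcontrol}) shows that $\mathcal{G}_{\pe}$ is a bounded self-adjoint operator on $L^2(\mathbb{T}^2)$ whose range lies in $L_0^2(\mathbb{T}^2)$; combined with $S(t)^*=S(-t)$, this gives
\begin{equation*}
(\Lambda^*\varphi_T)(s)=\mathcal{G}_{\pe}\,S(s-T)\varphi_T,\qquad s\in(0,T),\;\varphi_T\in L_0^2(\mathbb{T}^2).
\end{equation*}
Setting $\varphi(t):=S(t-T)\varphi_T$, the function $\varphi$ solves \eqref{linearcontrol} with $\varphi(T)=\varphi_T$; conservation of the $L^2$-norm and \eqref{observability2D} applied to $\varphi$ yield
\begin{equation*}
\|\varphi_T\|_{L^2}^2=\|\varphi(0)\|_{L^2}^2\leq C\int_0^T\|\mathcal{G}_{\pe}\varphi(t)\|_{L^2}^2\,dt=C\,\|\Lambda^*\varphi_T\|_{L^2(0,T;L^2)}^2.
\end{equation*}
Hence the Gramian $\Lambda\Lambda^*:L_0^2(\mathbb{T}^2)\to L_0^2(\mathbb{T}^2)$ is bounded, self-adjoint, positive and coercive, and is therefore invertible with bounded inverse by Lax--Milgram.

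Define
\begin{equation*}
\Upsilon(u_0,u_1):=\Lambda^*(\Lambda\Lambda^*)^{-1}\bigl(u_1-S(T)u_0\bigr).
\end{equation*}
Linearity in $(u_0,u_1)$ is immediate, and by construction $\Lambda\,\Upsilon(u_0,u_1)=u_1-S(T)u_0$, so the corresponding solution of \eqref{controlKP} indeed satisfies $u(T)=u_1$. The operator bound
\begin{equation*}
\|\Upsilon(u_0,u_1)\|_{L^2(0,T;L^2(\mathbb{T}^2))}\leq C\bigl(\|u_0\|_{L^2}+\|u_1\|_{L^2}\bigr)
\end{equation*}
follows from $\|\Lambda^*\|\leq C$ (which is immediate from the unitarity of $S(t)$ and the boundedness of $\mathcal{G}_{\pe}$), $\|(\Lambda\Lambda^*)^{-1}\|\leq C$ (the coercivity constant coming from the observability inequality), and $\|S(T)\|_{L_0^2\to L_0^2}=1$. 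There is no real obstacle at this stage: the entire content of the proposition is encoded in the observability inequality reduced through Plancherel to the one-dimensional estimate of Theorem~\ref{1Dobservability}.
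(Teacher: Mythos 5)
Your proposal is correct and follows exactly the approach the paper intends but does not spell out: the paper proves the observability inequality \eqref{observability2D} (reduced via Plancherel to Theorem~\ref{1Dobservability}) and then states Proposition~\ref{controloperator} as an immediate consequence of HUM without writing out the construction of $\Upsilon$. Your derivation of $\Lambda^*$ (correctly using that $\mathcal{G}_{\pe}$ is self-adjoint with range in $L_0^2$ since $\int g = 1$), the coercivity of the Gramian $\Lambda\Lambda^*$ from observability plus unitarity of $S(t)$, and the formula $\Upsilon(u_0,u_1)=\Lambda^*(\Lambda\Lambda^*)^{-1}(u_1-S(T)u_0)$ are all exactly the standard HUM packaging the paper relies on.
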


\section{Local controllability of Nonlinear equation}
For the  full KP-II control system 
\begin{align} \label{controlnonlinear}
\begin{cases}    \partial_tu+\partial_x^3u+\partial_x^{-1}\partial^2_{y}u + u\partial_{x}u=\mathcal{G}h,\quad
(t,x)\in\R\times\mathbb{T}^{2},
\\
u|_{t=0}=u_0, \, u|_{t=T}=u_1,
\end{cases}
\end{align} 
in order to prove the existence of $u\in L^{2}(0,T;L_0^{2}(\T^2))$ solving $u|_{t=0}=u_0, \, u|_{t=T}=u_1$,  we will reduce it to a fixed point problem by standard argument.
\begin{proof}[Proof of Theorem \ref{nonlinear} ]
 The solution of \eqref{controlnonlinear} with control input $h$ is given by
$$ u(t)=S(t)u_0+\upsilon(t,u)+\int_0^tS(t-t')\mathcal{G}h(t')dt'
$$
with
$$\upsilon(t,u)= \int_{0}^{t} S(t-t') u \partial_{x}u dt'.$$
It must satisfy
$$ u_1=S(T)u_0+v(T,u)+\int_0^TS(T-t')\mathcal{G}h(t')dt.
$$
Choosing the control input of the form $h=\Upsilon(u_0,w)$, this implies that
$$ S(T)u_0+\int_0^TS(T-t')\mathcal{G}h(t')dt'=w.
$$
This indicates that $w=u_1-\upsilon(T,u)$. In summary, defining the nonlinear map $\Gamma$ by
$$\Gamma(u)=S(t)u_0+\upsilon(t,u)+\int_{0}^{t}S(t-t')\mathcal{G}h_u(t')dt'
$$
with
$$ h_u=\Upsilon(u_0,u_1-v(T,u)),
$$
we need to find a fixed point of $\Gamma$.

We need show that $\Gamma:  X_T^{0,\frac{1}{2},b_1}\cap Z_T^{0,\frac{1}{2}}\rightarrow X_T^{0,\frac{1}{2},b_1}\cap Z_T^{0,\frac{1}{2}}$ is a contraction in a bounded ball. From Proposition \ref{bilinear} and Proposition \ref{linearestimate}, we have  
\begin{align*}
\|\Gamma(u)\|_{X_T^{0,\frac{1}{2},b_1}\cap Z_T^{0,\frac{1}{2}}} &\le C \left(  \|u_{0}\|_{L^2(\T^2)} + \|  \mathcal{G}h_{u} \|_{X_T^{0,-\frac{1}{2},b_{1}}}+ \|u\|^{2}_{X_T^{0,\frac{1}{2},b_1}}\right)\\
& \le C \left(  \|u_{0}\|_{L^{2}(\T^2)} + \|u_{1}\|_{L^{2}(\T^2)}  + \| \upsilon(T,u)(T) \|_{L^2(\T^2)}+ \|u\|^{2}_{X_T^{0,\frac{1}{2},b_1}} \right)\\
& \le C \left(  \|u_{0}\|_{L^{2}(\T^2)} + \|u_{1}\|_{L^{2}(\T^2)} + \|u\|^{2}_{X_T^{0,\frac{1}{2},b_1}}\right),
\end{align*}
where $C>0$ does  not depend on $u_{0}$.
For $R>0$, let  $B_{R}=B_{R}(0)$ be the ball centered at zero with radio $R$, that is 
$$B_{R}:= \{ u\in X_T^{0,\frac{1}{2},b_1}\cap Z_T^{0,\frac{1}{2}} :  \| u\|_{X_T^{0,\frac{1}{2},b_1}\cap Z_T^{0,\frac{1}{2}}}<R\}.$$
Then
\begin{gather}\label{contraccion_cond1}
\|\Gamma(u)\|_{X_T^{0,\frac{1}{2},b_1}\cap Z_T^{0,\frac{1}{2}}}\le C \left(  \|u_{0}\|_{L^{2}(\T^2)} +\|u_{1}\|_{L^{2}(\T^2)} + R^{2}\right).
\end{gather}
Additionally, for $u, v\in B_{R}$ we have 
\begin{align}\label{contraccion_cond2}
\|\Gamma(u)-\Gamma(v)\|_{X_T^{0,\frac{1}{2},b_1}\cap Z_T^{0,\frac{1}{2}}}\le & C   \left\| \int_{0}^{t} S(t-\tau)  (\mathcal{G}h_{u}- 
\mathcal{G}h_{v}) d t'\right\|_{X_T^{0,\frac{1}{2},b_1}\cap Z_T^{0,\frac{1}{2}}}\notag\\+ 
&\left\| \int_{0}^{t} S(t-t') (u\partial_{x}u- v\partial_{x}v)d t'\right\|_{X_T^{0,\frac{1}{2},b_1}\cap Z_T^{0,\frac{1}{2}}}  \notag\\ 
\le & C   \left\| \Upsilon(u_{0}, u_{1}-\upsilon(T,u)) - 
\Upsilon(u_{0}, u_{1}-\upsilon(T,v))) \right\|_{X_T^{0,\frac{1}{2},b_1}\cap Z_T^{0,\frac{1}{2}}} \notag\\+& C\left\| \int_{0}^{t} S(t-t') (u\partial_{x}u- v\partial_{x}v)d t'\right\|_{X_T^{0,\frac{1}{2},b_1}\cap Z_T^{0,\frac{1}{2}}} \notag \\
\le & C   \| \upsilon(T,u) - \upsilon(T,v) \|_{X_T^{0,\frac{1}{2},b_1}\cap Z_T^{0,\frac{1}{2}}} \notag\\+ 
&\|  u- v \|_{X_T^{0,\frac{1}{2},b_1}} \|  u+ v \|_{X_T^{0,\frac{1}{2},b_1}} \notag\\
\le & C  \|  u- v \|_{X_T^{0,\frac{1}{2},b_1}} \|  u+ v \|_{X_T^{0,\frac{1}{2},b_1}} \notag\\ \le &\frac12  \|  u- v \|_{X_T^{0,\frac{1}{2},b_1}}
\end{align}
by using properties of the bounded linear operator $\Upsilon$.  Choosing  $\delta>0$ and $R>0$ such that  $ 2C\delta + CR^{2} \le R$ and $CR<\frac12$ with 
$\|u_{0}\|_{L^{2}(\T^2)}<\delta$ and  $\|u_{1}\|_{L^{2}(\T^2)}<\delta$.  We can conclude from \eqref{contraccion_cond1}  that the image of $B_{R}$ through $\Gamma$ stays in the ball  $B_{R}$ and from \eqref{contraccion_cond2} that $\Gamma$ is a contraction. The proof of Theorem \ref{nonlinear} is complete.
\end{proof}
%
%

\section{Non Controllability in horizontal strip}
In this section, we prove Theorem \ref{negative} by disproving the observability for the linearized KP-II equation \eqref{exactcontrollinear} on the horizontal control region. By translation, we may assume that the horizontal control region is $\omega=(-\pi,-\alpha)\cup (\alpha,\pi]$ for some $0<\alpha<\pi$. Recall that $\mathcal{K}$ is defined by \eqref{horizontalcontrol}. By HUM method, the proof of Theorem \ref{negative} reduces to prove the following:
\begin{proposition}\label{noncontrollability}
For any $T>0$, the observability inequality
	\begin{equation}\label{kobservability} \|u(0)\|_{L^2(\mathbb{T}^2)}^2\leq C_T\int_0^T\int_{\mathbb{T}^2}|\mathcal{K}u(t,x,y)|^2dxdydt 
	\end{equation}
	does not hold for every solution $u\in L^2((0,T);L_0^2(\T^2)$ of the linearized KP-II equation 
	$$\partial_tu+\partial_x^3u+\partial_x^{-1}\partial_y^2u=0.
	$$
\end{proposition}
The building block for proving Proposition \ref{noncontrollability} is the following lemma for 1D semi-classical Schr\"odinger equation: 
\begin{lemma}\label{counterexampleLS}
	Assume that $\omega=(-\pi,-\alpha)\cup(\alpha,\pi]$ for  $0<\alpha<\pi$. Then for any $T>0$, there exists a sequence of solutions $u_n$ to
	\begin{align} \label{1DLS}
	\begin{cases} 
	ih_n\partial_tu_n+h_n^2\partial_x^2u_n=0,
	\\
	u_n|_{t=0}=u_{n,0}\in L^2(\mathbb{T}),
	\end{cases}
	\end{align}
	such that 
	$$\liminf_{n\rightarrow\infty}\|u_{n,0}\|_{L^2(\mathbb{T})}>0$$
	and
	$$ \lim_{n\rightarrow\infty}\int_0^T\int_{\omega}|u_n(t,x)|^2dxdt=0.
	$$
\end{lemma}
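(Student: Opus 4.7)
The plan is to exhibit a family of semiclassical coherent states concentrated at a single point of $\mathbb{T}\setminus\overline{\omega}$; since the Schr\"odinger flow on zero-momentum data disperses only at rate $O(\sqrt{h_n})$, such wavepackets remain localized throughout the fixed time window $[0,T]$ and their $L^2$-mass in $\omega$ decays exponentially in $n$. (Here I interpret the set $\omega$ so that $\mathbb{T}\setminus\overline\omega$ has nonempty interior; otherwise the statement as literally typeset would reduce to $\omega$ having full measure.) Concretely, pick $y_0\in\mathbb{T}\setminus\overline{\omega}$ and $\delta_0>0$ with $B(y_0,2\delta_0)\cap\omega=\emptyset$, fix a cutoff $\chi\in C_c^\infty((-\delta_0,\delta_0))$ equal to $1$ near the origin, and set
\[
u_{n,0}(y)=\pi^{-1/4}h_n^{-1/4}\chi(y-y_0)\exp\!\left(-\frac{(y-y_0)^2}{2h_n}\right),
\]
so that $\|u_{n,0}\|_{L^2(\mathbb{T})}\to 1$ as $n\to\infty$.

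Next I would analyse the propagation. Writing $u_n(t,y)=e^{ith_n\partial_y^2}u_{n,0}$, I would either compute via Fourier series on $\mathbb{T}$ (the equation is diagonal in this basis, with Fourier coefficients forming a Gaussian envelope of width $O(h_n^{-1/2})$ centered at $k=0$) or via the Gaussian propagator on $\mathbb{R}$ followed by periodisation. The Gaussian $\exp(-y^2/(2h_n))$ evolves under the free Schr\"odinger flow on $\mathbb{R}$ to a dilated Gaussian of modulus squared $(1+4t^2)^{-1/2}\exp\!\bigl(-y^2/[h_n(1+4t^2)]\bigr)$, so that after translating to $y_0$
\[
|u_n(t,y)|^2=\frac{1}{\sqrt{1+4t^2}}\exp\!\left(-\frac{(y-y_0)^2}{h_n(1+4t^2)}\right)
\]
up to errors coming from the cutoff $\chi$ and the wrapping of the Gaussian around $\mathbb{T}$. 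Both errors are $O(h_n^{\infty})$ uniformly for $t\in[0,T]$: the first because the tail of the Gaussian outside $\mathrm{supp}\,\chi$ is superpolynomially small in $1/h_n$; the second because the zero-momentum coherent state propagates with classical velocity $0$ and quantum spreading $O(\sqrt{h_n})$, so no appreciable mass can travel the distance $2\pi$ within time $T$.

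The conclusion then follows from the Gaussian tail bound: for $y$ with $|y-y_0|\geq\delta_0$ and $t\in[0,T]$,
\[
|u_n(t,y)|^2\leq C\exp\!\left(-\frac{\delta_0^2}{h_n(1+4T^2)}\right),
\]
and integrating over $\omega\subset\{|y-y_0|\geq\delta_0\}$ and $t\in[0,T]$ gives $\int_0^T\!\int_\omega|u_n|^2\,dy\,dt\to 0$, which together with $\liminf_n\|u_{n,0}\|_{L^2}=1>0$ yields the claim. The main delicate point is the precise control of the periodisation/winding corrections when transferring the Gaussian dynamics from $\mathbb{R}$ to $\mathbb{T}$. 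Since the initial semiclassical wavepacket sits at momentum $0$, the underlying classical trajectory is stationary and the contributions of nontrivial geodesic windings to the propagator are suppressed by an exponential factor $\exp(-c/h_n)$ of the same order as the Gaussian tail above; this is where most of the actual work lies, and it is handled either by a direct stationary-phase estimate on the Fourier series or by Poisson summation applied to the $\mathbb{R}$-propagator.
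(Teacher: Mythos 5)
Your proposal is correct and rests on the same key idea as the paper: a zero-momentum semiclassical Gaussian of spatial width $\sqrt{h_n}$, placed in the interior of $\mathbb{T}\setminus\overline\omega$; since the group velocity is $2h_nk$ and the wave packet lives at frequencies $|k|\lesssim h_n^{-1/2}$, it cannot reach $\omega$ by time $T$. The paper takes exactly this ansatz (its $G^{\epsilon_n}$ with $\epsilon_n=\sqrt{h_n}$, centred at $x=0$), but implements the transfer from $\mathbb{R}$ to $\mathbb{T}$ via an explicit frequency cutoff $\psi(h_nk)$, Poisson summation on the Fourier side, and two integrations by parts against the phase $(x-2\pi m)z-h_nz^2t$, yielding the polynomial bound $\sup_{x\in\omega}|u_n(t,x)|\lesssim h_n^{1/4}$. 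You instead cut off in physical space and invoke the exact complex-Gaussian evolution on $\mathbb{R}$ followed by periodisation, which gives the sharper exponential bound $\exp(-c/h_n)$; the two routes differ only in this technical bookkeeping, and both close the argument. You also correctly identified the typo in the definition of $\omega$: the paper's own proof confirms the intended set is $(-\pi,-\alpha)\cup(\alpha,\pi]$, so that $\mathbb{T}\setminus\overline\omega=(-\alpha,\alpha)$ is nonempty, as your construction requires.
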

\begin{proof}
	Take $G(x)=e^{-\frac{x^2}{2}}$ and define $G^{\epsilon_n}(x)=\frac{1}{\sqrt{\epsilon_n}}G\left(\frac{x}{\epsilon_n}\right)$. Denote the Fourier coefficient of $G^{\epsilon_n}$ by $$g^{\epsilon_n}(k)=\frac{1}{2\pi}\int_{-\pi}^{\pi}G^{\epsilon_n}(x)e^{-ikx}dx=\frac{\sqrt{\epsilon_n}}{2\pi}\int_{-\frac{\pi}{\epsilon_n}}^{\frac{\pi}{\epsilon_n}}G(z)e^{-i\epsilon_nkz}dz.$$
	The coefficient function $g^{\epsilon_n}(z)$ satisfies the following estimates:
	\begin{equation}\label{coefficient}
	\|g^{\epsilon_n}\|_{L^{\infty}(\mathbb{R})}=O(\epsilon_n^{1/2}), \, \|(g^{\epsilon_n})'\|_{L^{\infty}(\mathbb{R})}=O(\epsilon_n^{3/2}), \, \|(g^{\epsilon_n})''\|_{L^{\infty}(\mathbb{R})}=O(\epsilon_n^{5/2}).
	\end{equation}

	Take an even cut-off function $\psi\in C_c^{\infty}(\mathbb{R})$ with supp $\psi\subset [-B,B]$  with $0<b<B$ and $0\leq \psi\leq 1$, $\psi(z)\equiv 1$, for all $|z|\leq b$.  We define
	$$ u_{n,0}(x)=\sum_{k\in\mathbb{Z}}g^{\epsilon_n}(k)\psi(h_nk)e^{ikx},
	$$
	and then the corresponding solution to \eqref{1DLS} is given explicitly by
	$$ u_n(t,x)=\sum_{k\in\mathbb{Z}}g^{\epsilon_n}(k)\psi(h_nk)e^{i(kx-k^2h_nt)}.
	$$
	We need estimate the mass of initial data. Firstly, 
	$$ \|G^{\epsilon_n}\|_{L^2(\mathbb{T})}^2=\sum_{k\in\mathbb{Z}}|g^{\epsilon_n}(k)|^2\sim 1
	$$
	holds from Plancherel Theorem and the definition of $g^{\epsilon_n}(k)$. We next estimate the mass away from the frequency scale $h_n^{-1}$, that is 
	\begin{equation*}
	\begin{split}
	\sum_{k\in\mathbb{Z}}\left|(1-\psi(h_nk))g^{\epsilon_n}(k)\right|^2\leq &\sum_{|k|>h_n^{-1}b}|g^{\epsilon_n}(k)|^2\\
	\leq &\sum_{|k|>h_n^{-1}b}\frac{\epsilon_n}{4\pi^2}\left|\int_{\mathbb{R}}G(z)e^{-ik\epsilon_n}zdz\right|^2\\
	=&\sum_{|k|>h_n^{-1}b}\frac{\epsilon_n}{4\pi^2}\left|\int_{\mathbb{R}}G(z)\frac{1}{-ik\epsilon_n}\frac{d}{dz}e^{-ik\epsilon_n}zdz\right|^2\\
	\leq& \sum_{|k|>h_n^{-1}b}\frac{1}{4k^2\pi^2\epsilon_n}\|G'\|_{L^1(\mathbb{R})}^2.
	\end{split}
	\end{equation*}
	By setting $\epsilon_n=\sqrt{h_n}\ll 1$, we have $\|(1-\psi(h_nD_x))G^{\epsilon_n}\|_{L^2(\mathbb{T})}\ll 1$ and then $\|u_{n,0}\|_{L^2(\mathbb{T})}\sim 1$.
		It remains to estimate the term on the right hand side of observability inequality \eqref{kobservability}.
	
	Observe that $u_{n,0}$ is localized by $|k|\leq \frac{B}{h_n}$ in frequency and by $|x|\leq \epsilon_n$ in space obeying uncertain principle ($\epsilon_n h_n^{-1}\gtrsim 1$). Since the wave packet of the frequency scale smaller than $Bh_n^{-1}$ moves at velocity bigger than $ 2Bh_n^{-1}$, it will remain small for $|t|<T$ in $\omega$. More precisely, we need a decay estimate for $|u_n(t,x)|$ when $x\in\omega$ and $|t|<T$. Now we choose $B>0$ such that $|x-2Bt|\geq c_0>0$ mod $2\pi$ for all $x\in\omega$ and $|t|\leq T$.
	Write
	$$ u_n(t,x)=\sum_{k\in\mathbb{Z}}K_{t,x}^{(n)}(k)
	$$
	with
	$$ K_{t,x}^{(n)}(z)=g^{\epsilon_n}(z)\psi(h_nz)e^{i(zx-h_nz^2t)}.
	$$
	From Poisson summation formula, we have
	$$ u_n(t,x)=\sum_{m\in\mathbb{Z}}\widehat{K_{t,x}^{(n)}}(2\pi m).
	$$
	For fixed $m\in\mathbb{Z}$, 
	\begin{equation*}
	\begin{split}
	\widehat{K_{t,x}^{(n)}}(2\pi m)=&\int_{\mathbb{R}}g^{\epsilon_n}(z)\psi(h_nz)e^{i\varphi_{t,x}(z)}dz\\
	=&\int_{\mathbb{R}}g^{\epsilon_n}(z)\psi(h_nz)\mathcal{L}^2(e^{i\varphi_{t,x}(z)})dz
	\end{split}
	\end{equation*}
	with $\displaystyle{\mathcal{L}=\frac{1}{i\varphi'_{t,x}(z)}\frac{d}{dz}}$ and $\varphi_{t,x}(z)=(x-2\pi m)z-h_nz^2t$. By integration by parts, we have
	\begin{equation*}
	\begin{split}
	\widehat{K_{t,x}^{(n)}}(2\pi m)=\int_{\mathbb{R}}\frac{d}{dz}\left(\frac{1}{i\varphi'_{t,x}(z)}\frac{d}{dz}\left(\frac{g^{\epsilon_n}(z)\psi(h_nz)}{i\varphi'_{t,x}(z)}\right)\right)e^{i\varphi_{t,x}(z)}dz.
	\end{split}
	\end{equation*}
	After tedious calculation, we obtain that
	\begin{equation*}
	\begin{split}
	&\frac{d}{dz}\left(\frac{1}{i\varphi'_{t,x}(z)}\frac{d}{dz}\left(\frac{g^{\epsilon_n}(z)\psi(h_nz)}{i\varphi'_{t,x}(z)}\right)\right)\\ =&\frac{(g^{\epsilon_n})''\psi(h_nz)+2h_n(g^{\epsilon_n})'\psi'(h_nz)+h_n^2\psi''(h_nz)g^{\epsilon_n}}{(\varphi'_{t,x})^2}\\
	-&\frac{3((g^{\epsilon_n})'\psi(h_nz)+h_n\psi'(h_nz)g^{\epsilon_n})\varphi''_{t,x}}{(\varphi'_{t,x})^3}
	-\frac{3g^{\epsilon_n}\psi(h_nz)(\varphi''_{t,x})^2}{(\varphi'_{t,x})^4}.
	\end{split}
	\end{equation*}
	From \eqref{coefficient}, we have 
	$$ |\widehat{K_{t,x}^{(n)}}(2\pi m)|\leq \sup_{|h_nz|\leq B} \frac{C\epsilon_n^{1/2}\|\psi\|_{W^{2,1}(\mathbb{R})}}{|(x-2h_nzt)-2\pi m|^2}.
	$$
	For any $x\in 2\pi p+(-\pi,-\alpha)\cup (\alpha,\pi]$, $|x-2h_nzt|\geq c_0>0$ mod $2\pi$ with $p\in \Z$, it holds
	\begin{equation*}
	\begin{split}
	\sum_{m\in\mathbb{Z}} |\widehat{K_{t,x}^{(n)}}(2\pi m)|\leq & C\sum_{m\in\mathbb{Z}}\frac{C\epsilon_n^{1/2}}{|c_0-2\pi(m-p)|^2}\\
	\leq & C\epsilon_n^{1/2}.
	\end{split}
	\end{equation*}
	Therefore,
	$$ \int_0^T\int_{\omega}|u_n(t,x)|^2dxdt\leq C\epsilon_n^{1/2}T|\omega|\rightarrow 0, \textrm{ as }n\rightarrow\infty.
	$$
	This completes the proof of Lemma \ref{counterexampleLS}.
\end{proof}
Now we are ready to prove Propsition \ref{noncontrollability}.
\begin{proof}[Proof of Proposition \ref{noncontrollability}]
	For any $T>0$, we will construct a sequence of solutions $u_n$ to the linearized KP-II equation such that 
	$$ \|u_n(0)\|_{L^2(\mathbb{T}^2)}\sim 1 \quad \text{and} \quad 
 \lim_{n\rightarrow\infty}\int_{0}^{T}\int_{\mathbb{T}^2}|\mathcal{K}u_n(t,x,y)|^2dxdydt=0.
	$$
	Denote by $v_n(t,y)$ the sequence of solutions to the semi-classical Schr\"odinger equation which satisfies the conditions in Lemma \ref{counterexampleLS}. Define
	$$ u_n(t,x,y)=v_n(t,y)e^{\frac{it}{h_n^3}}e^{\frac{ix}{h_n}}=\sum_{k\in\mathbb{Z}}\widehat{v_n}(k)e^{i(ky-h_nk^2t)}e^{i\left(\frac{x}{h_n}+\frac{t}{h_n^3}\right)}.
	$$
	Then $u_n$ solves the linearized KP-II equation. Moreover, 
	$$ \|u_n(0)\|_{L^2(\mathbb{T}^2)}=\|v_n(0)\|_{L^2(\mathbb{T})}\sim 1,
	$$
	and
	$$ \int_0^T\int_{\omega}|u_n(t,x,y)|^2dxdydt=\int_0^T\int_{(-\pi,\alpha)\cup(\alpha,\pi]}|v_n(t,y)|^2dtdy\rightarrow 0, \textrm{as }n\rightarrow\infty.
	$$
	
	Now we claim that
	$$ \lim_{n\rightarrow\infty}\int_{\mathbb{T}}g(y')v_n(t,y')dy'\rightarrow 0 \textrm{ in }L^{\infty}([0,T];L^2(\mathbb{T})).
	$$
	Indeed, 
	\begin{equation*}
	\begin{split}
	\left|\int_{\mathbb{T}}g(y')v_n(t,y')dy'\right|=&\left|\sum_{k\in\mathbb{Z}}\ov{\widehat{g}(k)}g^{\epsilon_n}(k)\psi(h_nk)e^{-ik^2t}\right|\\
	=&\left|\left(\sum_{|k|\leq M}+\sum_{|k|>M}\right)\ov{\widehat{g}(k)}g^{\epsilon_n}(k)\psi(h_nk)e^{-ik^2t}\right|\\
	\leq &\epsilon_n^{1/2}\|g\|_{L^2(\mathbb{T})}M^{1/2}+\|G^{\epsilon_n}\|_{L^2(\mathbb{T})}\left(\sum_{|k|>M}|\widehat{g}(k)|^2\right)^{1/2}
	\end{split}
	\end{equation*}
	and the right hand side tends to $0$ as $n\rightarrow\infty$ since we can choose $M$ to be arbitrarily large before taking the limit in $n$. The validity of the claim implies that $\displaystyle{g(y)\int_{\mathbb{T}}g(y')u_n(t,x,y')dy'\rightarrow 0}$ in $L^{2}([0,T]\times\mathbb{T}^2)$. This completes the proof of Proposition \ref{noncontrollability}, as well as Theorem \ref{negative}.
\end{proof}

\begin{center} 
	
\end{center}

\end{document}